\newtheorem{theorem}{Theorem}[section]
\newtheorem{proposition}[theorem]{Proposition}
\newtheorem{lemma}[theorem]{Lemma}
\newtheorem{corollary}[theorem]{Corollary}
\theoremstyle{definition}
\newtheorem{definition}[theorem]{Definition}
\newtheorem{example}[theorem]{Example}
\newtheorem{remark}[theorem]{Remark}
\newcommand{\N}{\mathbb{N}}		
\newcommand{\Z}{\mathbb{Z}}		
\newcommand{\C}{\mathbb{C}}		
\newcommand{\A}{\mathcal{A}}		
\newcommand{\B}{\mathcal{B}}		
\renewcommand{\O}{\mathcal{O}}	
\renewcommand{\S}{\mathcal{S}}		
\renewcommand{\H}{\mathcal{H}}		
\newcommand{\M}{\mathcal{M}}		
\renewcommand{\a}{\mathbf{a}}		
\renewcommand{\b}{\mathbf{b}}		
\newcommand{\abs}[1]{\left\lvert #1 \right\rvert}
\newcommand{\norm}[1]{\left\lVert #1 \right\rVert}
\newcommand{\innpr}[2]{\left\langle #1, #2 \right\rangle}
\DeclareMathOperator{\id}{id}
\DeclareMathOperator{\orb}{orb}
\DeclareMathOperator{\Orb}{Orb}
\DeclareMathOperator{\dist}{d}
\DeclareMathOperator{\Span}{span}
\title{Dynamical systems with bounded condition and $C^{*}$-algebras}
\author{Takehiko Mori\footnote{ORCID:0000-0001-6448-8293}\\
Graduate School of Science and Engineering\\
Chiba University \\
Inage-ku, Chiba 263-8522, Japan
}
\date{}
\begin{document}


\maketitle

\begin{abstract}
In this paper, we study abstract dynamical systems with discrete phase spaces. 
One example of such a system is induced by the $3 x{+}1$-map on the set of all natural numbers, also known as the Collatz map. 
Our main focus is on dynamical systems induced by maps on countable discrete sets that satisfy a bounded condition. 
When these maps satisfy the bounded and a  separating conditions, a minimality of the induced dynamical systems is equivalent to the irreducibility of certain $C^{*}$-algebras on certain Hilbert spaces. 
For a map $f$ on a general discrete phase space, we consider $f$-invariant sets and investigate their properties. 
When the phase space is countable and the map satisfies the bounded condition, we construct an order-preserving injection from the family of $f$-invariant sets to the family of reducing subspaces for the corresponding $C^{*}$-algebra. 
By introducing the totally uniqueness condition for $f$, we show that this injection is a bijection if $f$ satisfies this condition. 
This condition is crucial in providing a symbolic representation of the dynamical system induced by $f$, and we discuss the relationship between this symbolic representation and that of a topological dynamical system. 
\end{abstract}


\maketitle

\section*{Introduction}

The interaction between dynamical systems and operator theory is an a long-standing and active area of study. 
In particular, a great deal of interesting work has been carried out on linking topological dynamical systems to the theory of $C^{*}$-algebras. 
A topological dynamical system with discrete time consists of a metric space $X$ and a continuous map $f$ on $X$. 
Typically, $X$ is a compact metric space and $f$ is a homeomorphism.
A $C^{*}$-algebra is a $*$-algebra consisting of bounded linear operators on a Hilbert space that is given the topology of the operator norm. 
In their pioneering work \cite{GPS95JRAM}, T. Giordano, I. F. Putnam and C. F. Skau proved that two minimal homeomorphisms on the Cantor set are strongly orbit equivalent if and only if their associated crossed-product $C^{*}$-algebras are isomorphic. 
Giordano, H. Matui, Putnam and Skau later generalized this result to minimal $\Z^{d}$-actions in \cite{GMPS10IM}. 
M. Boyle and J. Tomiyama have studied the relationship between orbit equivalence for topologically free homeomorphisms on compact Hausdorff spaces and their associated crossed product $C^{*}$-algebras (see \cite{BT98JMSJ}). 

Another important example is the Cuntz-Krieger algebra, which is a universal $C^{*}$-algebra that arises from a topological Markov shift. 
A universal $C^{*}$-algebra is defined by its generators and relations. 
Further details about universal $C^{*}$-algebras can be found in \cite[Section 1]{Bl85MS}, for example. 
The Cuntz-Krieger algebra, denoted by $\O_{A}$, is a well-known example of a universal $C^{*}$-algebra. 
It is determined by a set of partial isometries that satisfy the Cuntz-Krieger relation, which can be described as follows. 

\begin{definition}[\cite{CK80IM}, The Cuntz-Krieger algebras $\O_{A}$]
For $k \in \N_{>1}$ and $k \times k$ matrix $A=(A(i,j))$ with entries in $\{0,1\}$ such that no row is zero, the Cuntz-Krieger algebra $\O_{A}$ is the universal $C^{*}$-algebra generated by partial isometries $S_{1},S_{2}, \cdots, S_{k}$ which satisfy 

\begin{enumerate}[(i)]
\item $\sum_{i=1}^{k}S_{i}S_{i}^{*}=I$; 
\item $S_{j}^{*}S_{j}=\sum_{i=1}^{k}A(j,i)S_{i}S_{i}^{*}$. 
\end{enumerate}

\end{definition}

In \cite{Cu77CMP}, J. Cuntz studied $\O_{A}$ when $A(i,j)=1$ for every $1 \leq i,j \leq k$. 
It is called the Cuntz algebra denoted by $\O_{k}$. 
In \cite{CK80IM}, Cuntz and W. Krieger discuss a connection between the properties of dynamical systems related to the idea of Markov partitions for topological dynamical systems and their associated $C^{*}$-algebras. 
For more general subshifts, K. Matsumoto introduced the $C^{*}$-algebras and computed their $K$-groups in \cite{Ma97IJM} and \cite{Ma98MS}. 
Matsumoto and Matui classified topological Markov shifts up to continuous orbit equivalence based on the concept of the Cuntz-Krieger algebras in \cite{MM14KJM}. 

The above results were established derived from the theory of topological dynamical systems with compact, metric phase spaces. 
However, we shall be interested in dynamical systems whose phase spaces are discrete - that is to say, those that do not have a topology. 
One example of such a system is the pair of the set of all natural numbers and $3 x{+}1$-map, also known as the Collatz map. 
The Collatz map $f$ is defined for positive integers by setting $f(n)$ equal to $3 n{+}1$ when $n$ is odd and to $n/2$ when $n$ is even. 
The Collatz conjecture, also known as the $3 n{+}1$-problem, concerns positive integers and is named after L. Collatz, who first described it in an informal lecture at the International Mathematical Congress in Cambridge, Massachusetts, in 1950 \cite{La10TUC}. 
The conjecture states that, starting from any positive integer $n$, some iteration of $f$ will eventually yield the value $1$. 
We regard $(\N,f)$ as a dynamical system with discrete phase space.
In this case, the phase space is countable. 
But, the dynamical system is not simple, since the Collatz conjecture remains unsolved for a long time. 
There are a few researches which study the Collatz conjecture by functional analytic approaches in this point of view, e.g., \cite{LSW99EM} and \cite{LP21IFACPOL}. 

In \cite{Mo25AOT}, we presented a method of formulating the Collatz conjecture in terms of dynamical systems and operator theory. 
We constructed some $C^{*}$-algebras associated with the dynamical systems induced by the Collatz map. 

\begin{theorem}[{\cite[Theorem 4.2.7]{Mo25AOT}}] \label{IrrEquCol}
Let $f$ be the Collatz map, $\H$ be a Hilbert space with $\dim\H=\aleph_{0}$ and $\{e_{n}\}_{n \in \N}$ be a complete orthonormal system for $\H$. 
When $T_{1},T_{2} \in \B(\H)$ are defined by: 

\begin{equation*}
T_{1}e_{n}= 
\begin{cases}
e_{3n+1},	&	n:\text{odd}, \\ 
0,		&	n:\text{even}, 
\end{cases}
\end{equation*}

\begin{equation*}
T_{2}e_{n}= 
\begin{cases}
0,		&	n:\text{odd}, \\ 
e_{n/2}, 	&	n:\text{even}, 
\end{cases}
\end{equation*} 

where $n \in \N$, the following are equivalent: 

\begin{enumerate}[(i)]
\item	$C^{*}(T_{1}, T_{2})$ is irreducible; 
\item	The Collatz conjecture holds. 
\end{enumerate}

\end{theorem}

$\B(\H)$ denotes the set of all bounded linear operators on $\H$. 
A subspace $\M \subseteq \H$ is said to be a reducing subspace for a set $\S \subseteq \B(\H)$ when $T\M, T^{*}\M \subseteq \M$ for every $T \in \S$. 
In this paper, we will only refer to a subspace as `reducing' if it is a topologically closed. 
A reducing subspace is called trivial if it is either $\{0\}$ or $\H$. 
For a $C^{*}$-algebra $\A \subseteq \B(\H)$, we say that $\A$ is irreducible when there are no non-trivial reducing subspaces for $\A$. 

The first-return map for a discrete dynamical system is defined in the same way as for a continuous dynamical system. 

\begin{theorem}[{\cite[Proposition 4.3.4, Theorem 4.3.10]{Mo25AOT}}]
Let $f$ be the Collatz map and $\H$ be a Hilbert space with $\dim\H=\aleph_{0}$. 
Let $N_{1} = \{n \in \N \mid n \equiv 1, 5 \pmod{6}\}$, $N_{2} = \{n \in \N \mid n \equiv 4, 16 \pmod{18}\}$, $P$ be the first-return map for $f$ on $N_{1} \cup N_{2}$, and $\{e_{n}\}_{n \in N_{1} \cup N_{2}}$ be a complete orthonormal system for $\H$. 
When $T_{1}, T_{2} \in \B(\H)$ is defined by: 

\begin{equation*}
T_{1}e_{n}= 
\begin{cases}
e_{P(n)},	&	n:\text{odd}, \\ 
0,		&	n:\text{even}, 
\end{cases}
\end{equation*}

\begin{equation*}
T_{2}e_{n}= 
\begin{cases}
0,		& n:\text{odd}, \\ 
e_{P(n)}, 	& n:\text{even}, 
\end{cases}
\end{equation*}

where $n \in N_{1} \cup N_{2}$, $C^{*}(T_{1}, T_{2})$ is isomorphic to the Cuntz algebra $\O_{2}$ and the following are equivalent: 

\begin{enumerate}[(i)]
\item	$C^{*}(T_{1}, T_{2})$ is irreducible; 
\item	The Collatz conjecture holds. 
\end{enumerate}

\end{theorem}

In this paper, we establish a new connection between dynamical systems with discrete phase spaces and $C^{*}$-algebras. 
Invariant sets for a map on a discrete phase space are given and we described their properties. 
When a phase space is countable and a map on the space satisfies a bounded condition, we construct a $C^{*}$-algebra on a Hilbert space which has a countable complete orthonormal system. 
By considering the inclusion orders, we give an order-preserving injection from the family of invariant sets for the map to the family of reducing subspaces for the corresponding $C^{*}$-algebra. 
Furthermore, we provide a sufficient condition for that the above injection is a bijection. 
The condition will be called the totally uniqueness condition for the map. 
We also prove that every $q x{+}d$-function, where $q$ and $d$ are arbitrary positive odd integers, including the Collatz map and more general maps satisfy the totally uniqueness condition. 
Through this condition, we discuss a method for relating dynamical systems on discrete phase spaces to symbolic dynamical systems. 
We also examine the connection between this symbolic representation and that for topological dynamical systems. 

The paper is organized as follows. 
In section 1, we provide the concept of invariant sets in dynamical systems with discrete phase spaces and minimality of the dynamical systems. 
In section 2, we give constructions of $C^{*}$-algebras for maps on discrete phase spaces satisfying the bounded condition and theorems revealed in previous studies. 
We show that, for such a dynamical system, there exists an order-preserving injection from the family of invariant sets for the map defined on the phase space to the family of reducing subspaces for the associated $C^{*}$-algebra. 
Moreover, if Markov partitions for such dynamical systems exist, the associated $C^{*}$-algebras are naturally related to the Cuntz-Krieger algebras. 
In section 3, we define the uniqueness condition for maps. 
Under the condition, we analyze the property of a loop of dynamical systems. 
It is then shown that the tuple of parities of a loop for the Collatz map is aperiodic. 
In section 4, we introduce the totally uniqueness condition for maps, a stronger condition than the uniqueness condition. 
We show that the condition is a sufficient condition for that the order-preserving map in section 2 is a bijection. 
One example that satisfies the totally uniqueness condition is the Collatz map. 
For more general maps, we prove that they satisfy the condition. 
In section 5, we discuss a symbolic representation of dynamical systems with discrete phase spaces. 
From the totally uniqueness condition, we can naturally construct a faithful symbolic representation of dynamical systems with a discrete phase space. 
We verify basic properties of the symbolic representation of our dynamical system by comparing it with that of a topological dynamical system. 

Throughout this paper, $\N$ and $\C$ denote the sets of all positive integers and complex numbers, respectively. 
The cardinality of $\N$ is denoted by $\aleph_{0}$. 
The absolute value and complex conjugate of a number $x$ are denoted by $\abs{x}$ and $\overline{x}$. 
The cardinality of any set $X$ is denoted by $|X|$, and the identity map on $X$ is denoted by $\id_{X}$. 
If $X$ is a topological space and $Y$ is a subspace of $X$, then the closure of $Y$ in $X$ is denoted by $\overline{Y}$. 
For $k \in \N_{>1}$, the disjoint union over $m \in \N$ of $\{1,2,\cdots,k\}^{m}$ is denoted by $\{1,2,\cdots,k\}^{*}$.


\section{Dynamical systems with discrete phase spaces}


In this section, we will define invariant sets in dynamical systems with discrete phase spaces. 
A discrete-time dynamical system consists of a phase space $X$ and a map $f$ on $X$. 
Typically, $X$ is a differentiable manifold and $f$ is a diffeomorphism, or $X$ is a compact metric space and $f$ is a homeomorphism. 
Here, we are interested in cases where $X$ is a discrete set and $f$ is a map that does not necessarily have to be invertible or surjective. 
For now, let $X$ be a discrete set, i.e. a set that does not require a topology to exist. 

We denote by $f^{m}$ the $m$th iterate of $f$, where $m \in \{0\} \cup \N$, i.e., $f^{0} = \id_{X}$ and $f^{m+1} = f \circ f^{m}$. 
The (forward) orbit of a point $x \in X$, denoted by $\orb(x;f)$, is defined as follows: 

\begin{equation*}
\orb(x;f)=\{f^{n}(x) \mid n \in \{0\} \cup \N\}. 
\end{equation*}

For convenience, we define an equivalence relation on $X$ by $f$. 
It is defined as follows: 
For $x, y \in X$, 

\begin{equation*}
x \sim y \iff \orb(x; f) \cap \orb(y; f) \neq \emptyset, 
\end{equation*}

in other words, there exist $l,m \in \N$ such that $f^{l}(x) = f^{m}(y)$. 
We define the total orbit of a point of $X$ using the above equivalence relation. 

\begin{definition}[Total orbits]
For $x \in X$, the total orbit of $x$, denoted by $\Orb(x;f)$, is defined as 

\begin{equation*}
\Orb(x;f)=\{y \in X \mid x \sim y\}. 
\end{equation*}

\end{definition}

Invariant sets for dynamical systems with discrete phase spaces are defined as follows: 

\begin{definition}[Invariant sets]
Let $K$ be a subset of $X$. 
If $f(K) \subseteq K$ and $f^{-1}(K) \subseteq K$, then we say that $K$ is an $f$-invariant set. 
An $f$-invariant set $K$ is called trivial if $K=\emptyset$ or $K=X$. 
We say that the pair $(X,f)$ is minimal when there are no non-trivial $f$-invariant sets. 
\end{definition}

If $f$ is invertible, then $K \subseteq X$ is an $f$-invariant set if and only if $f(K)=K$. 
The total orbit is the smallest $f$-invariant set. 

\begin{lemma}

\begin{enumerate}[(i)]
\item	For $x \in X$, $\Orb(x;f)$ is a $f$-invariant set. If $K \subseteq X$ is a $f$-invariant set containing $x$, then $\Orb(x;f) \subseteq K$. 
\item	$(X,f)$ is minimal if and only if $x \sim y$ for every $x,y \in X$. 
\end{enumerate}

\end{lemma}

\begin{proof}
Let $y \in \Orb(x;f)$ and assume that $z \in f^{-1}(y)$. 
Since $f(y),z \sim y$ and $\sim$ is an equivalence relation on $X$, we have that $f(y),z \sim x$. 
Therefore, $f(y),z \in \Orb(x;f)$. 
It follows that $f(\Orb(x;f)), f^{-1}(\Orb(x;f)) \subseteq \Orb(x;f)$, and so $\Orb(x;f)$ is a $f$-invariant set. 

Suppose that $K \subseteq X$ is a $f$-invariant set containing $x$. 
If $y \in \Orb(x;f)$, then there exist $l,m \in \N$ such that $f^{l}(x)=f^{m}(y)$. 
This implies that $y \in f^{-m}(f^{l}(x)) \subseteq f^{-m}(f^{l}(K)) \subseteq K$. 
Hence, $\Orb(x;f) \subseteq K$. 
Therefore,  the smallest $f$-invariant set containing $x$ is equal to $\Orb(x;f)$. 
Thus, $(i)$ holds. 

According to the above discussion, if $(X,f)$ is minimal, then $\Orb(x;f)=X$ for every $x \in X$. 
This implies that $x \sim y$ for every $x,y \in X$. 
Conversely, assume that $x \sim y$ for every $x,y \in X$. 
Then, $X=\Orb(x;f)$ for every $x \in X$. 
Since $\Orb(x;f)$ is the smallest $f$-invariant set containing $x$, the minimality of $(X,f)$ holds. 
Hence, $(ii)$ holds. 
\end{proof}

The definition of a total orbit is the same as for a standard topological dynamical system. 
Although $f$ does not need to be injective, we will mostly assume that $f^{-1}(x)$ is a finite set for all $x \in X$. 
In this case, the total orbit of each point becomes a countable set. 
In the context of a topological dynamical system, it is referred to as `minimal' if and only if the total orbit of any point is dense in the phase space. 
However, the phase space of our dynamical system $(X,f)$ is discrete, so the above definition of minimality does not apply. 
For the system to be minimal, $X$ must necessarily be countable. 
For the rest of this paper, we will assume that $X$ is a countable set.

\section{Bounded condition and $C^{*}$-algebras}

In \cite{Mo25AOT}, we presented the definitions of the bounded and separating conditions for $f$, alongside the following results for the case $|X|=\aleph_{0}$. 
We note that most of these results also hold when $|X| < \aleph_{0}$. 
The bounded condition plays an crucial role in this study. 
Consider a partition of the discrete space $X$. 

\begin{definition}[{\cite[Definition 5.1.1]{Mo25AOT}}, Bounded condition]
When there exist mutually disjoint subsets $X_{1},X_{2}, \cdots ,X_{k}$ of $X$, where $k \in \N$ such that: 

\begin{enumerate}[(i)]
\item	$\bigcup_{i=1}^{k} X_{i}=X$; 
\item	$f|_{X_{i}}$ is injective, where $1 \leq i \leq k$, 
\end{enumerate}

we say that $f$ satisfies the bounded condition. 
\end{definition}

In what follows, we will assume that $f$ satisfies the bounded condition. 
For $(X,f)$, we let $\H$ be a Hilbert space over $\C$ with $\dim\H=|X|$ and let $\{e_x\}_{x \in X}$ be a complete orthonormal system (abbreviated as C.O.N.S.) for $\H$. 
Define $k$ partial isometries $T_{1},T_{2},\cdots,T_{k} \in \B(\H)$ by: 

\begin{align*}
T_{i} e_{x}= 
\begin{cases}
e_{f(x)},	&	x \in X_{i}, \\ 
0,		&	x \notin X_{i}, 
\end{cases}
\end{align*}

where $1 \leq i \leq k$. 
The bounded condition is defined as meaning that each $T_{i}$ is a partial isometry. 
This setting leads the following lemma and theorem. 
For detailed proofs, please refer to \cite{Mo25AOT}. 

\begin{lemma}[{\cite[Lemma 5.1.7]{Mo25AOT}}] \label{RedSubOrb}
For every $x \in X$, 

\begin{equation*}
\overline{C^{*}(T_{1},T_{2},\cdots,T_{k})e_{x}}=\overline{\Span\{e_{y} \mid y \in \Orb(x;f)\}}. 
\end{equation*}

\end{lemma}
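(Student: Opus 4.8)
The plan is to establish both inclusions by first recording how the generators and their adjoints act on the orthonormal basis, and then translating forward and backward moves along $f$ into words in the $T_{i}$ and $T_{i}^{*}$. The starting point is the computation of $T_{i}^{*}$. Since $f|_{X_{i}}$ is injective by the bounded condition, evaluating the matrix coefficients $\innpr{T_{i}^{*}e_{y}}{e_{x}}=\innpr{e_{y}}{T_{i}e_{x}}$ shows that $T_{i}^{*}e_{y}$ equals $e_{z}$ when there is a (necessarily unique) $z \in X_{i}$ with $f(z)=y$, and equals $0$ otherwise. Thus $T_{i}$ encodes the forward move $e_{x} \mapsto e_{f(x)}$ restricted to $X_{i}$, while $T_{i}^{*}$ encodes the partial inverse $e_{f(z)} \mapsto e_{z}$ for $z \in X_{i}$; in particular $T_{i}^{*}T_{i}e_{x}=e_{x}$ whenever $x \in X_{i}$.

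For the inclusion $\overline{C^{*}(T_{1},\cdots,T_{k})e_{x}} \subseteq \overline{\Span\{e_{y} \mid y \in \Orb(x;f)\}}$, write $\M=\overline{\Span\{e_{y} \mid y \in \Orb(x;f)\}}$. By the action just described, for $y \in \Orb(x;f)$ the vector $T_{i}e_{y}$ is $0$ or $e_{f(y)}$ with $f(y) \sim y$, and $T_{i}^{*}e_{y}$ is $0$ or $e_{z}$ with $f(z)=y$, hence $z \sim y$; since $\sim$ is an equivalence relation, $T_{i}\M, T_{i}^{*}\M \subseteq \M$, so $\M$ is reducing. As $e_{x} \in \M$, every polynomial in $T_{i},T_{i}^{*}$ sends $e_{x}$ into $\M$, and because $\M$ is closed and operator application is norm-continuous, so does every element of the $C^{*}$-algebra. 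Hence $C^{*}(T_{1},\cdots,T_{k})e_{x} \subseteq \M$ and the inclusion follows.

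The reverse inclusion is the heart of the argument and is where the equivalence relation is used constructively; it suffices to show $e_{y} \in C^{*}(T_{1},\cdots,T_{k})e_{x}$ for each $y \in \Orb(x;f)$. Fix such a $y$. Since $x \sim y$ there exist $l,m$, which we may take to be positive (replacing $(l,m)$ by $(l+1,m+1)$ if needed), with $f^{l}(x)=f^{m}(y)=:z$. Walking along $x,f(x),\cdots,f^{l}(x)$ and choosing at each step the index $i$ with $f^{j}(x) \in X_{i}$, the corresponding product $W$ of generators satisfies $We_{x}=e_{z}$. Similarly, walking along $y,f(y),\cdots,f^{m}(y)=z$ and applying the backward formula for $T_{i}^{*}$ at each step, a product $V$ of adjoints satisfies $Ve_{z}=e_{y}$. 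Then $VW \in C^{*}(T_{1},\cdots,T_{k})$ and $VWe_{x}=e_{y}$, so $e_{y} \in C^{*}(T_{1},\cdots,T_{k})e_{x}$. Taking the closed linear span over all $y \in \Orb(x;f)$ gives the remaining inclusion, and equality follows.

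The main obstacle is the bookkeeping in this last step: one must check at each stage of the forward walk that the relevant basis vector truly lies in the set $X_{i}$ for the index applied, so that $T_{i}$ acts as $f$ rather than annihilating, and symmetrically that the backward walk uses precisely the index $i$ with $f^{j}(y) \in X_{i}$, so that $T_{i}^{*}e_{f^{j+1}(y)}=e_{f^{j}(y)}$. Both facts rely on the covering property $\bigcup_{i}X_{i}=X$ together with injectivity of $f|_{X_{i}}$, i.e. precisely on the bounded condition. Taking $l,m \geq 1$ keeps both $W$ and $V$ genuine products of generators, so the argument never requires the identity to lie in $C^{*}(T_{1},\cdots,T_{k})$; in the extreme case $y=x$ the recipe collapses to $T_{i}^{*}T_{i}e_{x}=e_{x}$.
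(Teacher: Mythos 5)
Your proof is correct: the computation of $T_{i}^{*}$ from the bounded condition, the verification that $\overline{\Span\{e_{y} \mid y \in \Orb(x;f)\}}$ is reducing, and the explicit word $VW$ (with $We_{x}=e_{f^{l}(x)}$ and $V$ a product of adjoints pulling back along the orbit of $y$) give both inclusions, and you correctly handle the edge case $y=x$ via $T_{i}^{*}T_{i}e_{x}=e_{x}$ so that unitality of $C^{*}(T_{1},\cdots,T_{k})$ is never needed. The paper defers the proof to \cite[Lemma 5.1.7]{Mo25AOT} rather than reproducing it, but your argument is exactly the natural one that result rests on, so there is nothing further to compare.
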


\begin{theorem}[{\cite[Corollary 5.1.9]{Mo25AOT}}] \label{IrrMin}
If $C^{*}(T_{1},T_{2},\cdots,T_{k})$ is irreducible, then $(X,f)$ is minimal. 
\end{theorem}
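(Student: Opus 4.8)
The plan is to prove the statement directly: assuming $C^{*}(T_{1},T_{2},\cdots,T_{k})$ is irreducible, I will show that $\Orb(x;f) = X$ for every $x \in X$, which by the preceding lemma is equivalent to the minimality of $(X,f)$. For brevity write $\A = C^{*}(T_{1},T_{2},\cdots,T_{k})$.

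The central observation I would establish first is that, for each $x \in X$, the cyclic subspace $\M_{x} := \overline{\A e_{x}}$ is a reducing subspace for $\A$. Indeed, for any $T \in \A$ we have $T\A \subseteq \A$, so by continuity $T\M_{x} = T\overline{\A e_{x}} \subseteq \overline{T\A e_{x}} \subseteq \overline{\A e_{x}} = \M_{x}$; since $\A$ is a $C^{*}$-algebra it is self-adjoint, so $T^{*} \in \A$ and the identical computation gives $T^{*}\M_{x} \subseteq \M_{x}$. Hence $\M_{x}$ satisfies the defining condition $T\M_{x}, T^{*}\M_{x} \subseteq \M_{x}$ for every $T \in \A$.

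Next I would invoke Lemma \ref{RedSubOrb} to identify $\M_{x} = \overline{\Span\{e_{y} \mid y \in \Orb(x;f)\}}$. Because $x \sim x$, we have $x \in \Orb(x;f)$, so $e_{x} \in \M_{x}$ and in particular $\M_{x} \neq \{0\}$. By irreducibility $\M_{x}$ must be trivial, and being non-zero it must therefore equal all of $\H$. I would then translate $\M_{x} = \H$ into $\Orb(x;f) = X$ using that $\{e_{y}\}_{y \in X}$ is an orthonormal basis: if some $z \in X$ failed to lie in $\Orb(x;f)$, then $e_{z}$ would be orthogonal to every $e_{y}$ with $y \in \Orb(x;f)$, hence orthogonal to $\M_{x} = \H$, forcing $e_{z} = 0$, which is impossible. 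Thus $\Orb(x;f) = X$ for every $x \in X$, and minimality follows from the lemma characterizing minimality.

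The argument is essentially routine once Lemma \ref{RedSubOrb} is available, so there is no single deep obstacle; the two points that genuinely require care are the verification that $\M_{x}$ is closed under adjoints — which is precisely where the self-adjointness of the $C^{*}$-algebra $\A$ (rather than a mere operator algebra) enters — and the confirmation that $\M_{x} \neq \{0\}$, which is supplied by the fact that $x \in \Orb(x;f)$ so that $e_{x} \in \M_{x}$.
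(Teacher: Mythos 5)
Your proof is correct and takes essentially the route the paper intends: the theorem is presented as a direct consequence of Lemma \ref{RedSubOrb} (the detailed proof being deferred to the cited reference, where it appears as a corollary of that lemma), and your argument---the cyclic subspace $\overline{C^{*}(T_{1},\cdots,T_{k})e_{x}}$ is reducing, irreducibility forces it to equal $\H$, hence $\Orb(x;f)=X$ for every $x$, and minimality follows from the orbit-characterization lemma---is exactly that derivation. The two points you single out for care (self-adjointness of the algebra for closure under adjoints, and $e_{x}\in\M_{x}$ obtained via $x\in\Orb(x;f)$ through the lemma rather than via a unit, which the algebra need not contain) are both handled correctly.
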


According to this theorem, the minimality of every dynamical system with bounded condition is a necessary condition for the irreducibility of the associated $C^{*}$-algebra. 
It is also a sufficient condition when the following separating condition met. 

Let $m \in \N$ and $\sigma$ be a cyclic permutation of $\{1,2,\cdots,m\}$ defined by:  $1 \mapsto 2 \mapsto 3 \mapsto \cdots \mapsto m-1 \mapsto m \mapsto 1$. 
We say that a $m$-tuple of numbers $(i_{1},i_{2},\cdots,i_{m})$ is aperiodic if $(i_{\sigma^{j}(1)},i_{\sigma^{j}(2)},\cdots,i_{\sigma^{j}(m)})\neq (i_{1},i_{2},\cdots,i_{m})$ for every $1 \leq j < m$. 

\begin{definition}[{\cite[Definition 5.2.1]{Mo25AOT}}, Separating condition] \label{SepCon}
Fix $x \in X$. 
When the following conditions hold: 

\begin{enumerate}[(i)]
\item	There exists $m \in \N$ satisfying $f^{m}(x)=x$; 
\item	For the minimum $m$ as above, the $m$-tuple $(i_{1},i_{2},\cdots,i_{m}) \in \{1,2,\cdots,k\}^{*}$, such that $f^{j-1}(x) \in X_{i_{j}}$ where $1 \leq j \leq m$, is aperiodic, 
\end{enumerate}

we say that $f$ satisfies the separating condition for $x$. 
\end{definition}

\begin{theorem}[{\cite[Theorem 5.2.4]{Mo25AOT}}] \label{MinIrr}
Assume that $f$ satisfies the separating condition for some point $x \in X$. 
If $(X,f)$ is minimal, then $C^{*}(T_{1},T_{2},\cdots,T_{k})$ is irreducible. 
\end{theorem}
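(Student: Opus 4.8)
The plan is to prove the contrapositive-style implication: assuming $f$ satisfies the separating condition for some $x_{0} \in X$ and that $(X,f)$ is minimal, I want to show that the only reducing subspaces for $C^{*}(T_{1},\cdots,T_{k})$ are $\{0\}$ and $\H$. Let $\M$ be a nonzero reducing subspace. First I would use minimality together with Lemma \ref{RedSubOrb}: since $(X,f)$ is minimal, $\Orb(x;f) = X$ for every $x$, so for any single $x$ we already have $\overline{C^{*}(T_{1},\cdots,T_{k})e_{x}} = \overline{\Span\{e_{y} \mid y \in X\}} = \H$. Thus the crux is to extract from a nonzero $\M$ a single standard basis vector $e_{x}$ (or at least a vector whose cyclic subspace is all of $\H$), after which reducibility forces $\M = \H$.

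The main work is therefore to show that a nonzero reducing $\M$ must contain some $e_{x}$. Here is where the separating condition enters. Pick a nonzero $\xi = \sum_{x \in X} c_{x} e_{x} \in \M$ and consider the point $x_{0}$ for which the separating condition holds, with minimal period $m$ and aperiodic parity tuple $(i_{1},\cdots,i_{m})$. I would examine the operator $W = T_{i_{m}} T_{i_{m-1}} \cdots T_{i_{1}}$ (the composition along one period). Because $f^{j-1}(x_{0}) \in X_{i_{j}}$ and each $T_{i}$ shifts $e_{z}$ to $e_{f(z)}$ exactly when $z \in X_{i}$, the word $W$ acts as a kind of return map around the cycle through $x_{0}$. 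The aperiodicity of the parity tuple is precisely what guarantees that, among the cyclic rotations of this word, the return path through $x_{0}$ is not accidentally realized by another point in the cycle — this separates $e_{x_{0}}$ from the other basis vectors in its periodic orbit and should let me show that $W$ (or $W^{*}W$, $W W^{*}$) has $e_{x_{0}}$ as an isolated eigenvector. Concretely, I expect $W^{*} W e_{x_{0}} = e_{x_{0}}$ while the rotated words kill $e_{x_{0}}$ or map it off the orbit, so that repeatedly applying $W$ and $W^{*}$ to $\xi$ and passing to a limit (or using spectral projections of the self-adjoint operator $W^{*}W \in C^{*}(T_{1},\cdots,T_{k})$) isolates the coefficient $c_{x_{0}}$ and produces a nonzero multiple of $e_{x_{0}}$ in $\M$.

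The hard part will be this isolation step: turning the combinatorial aperiodicity into an analytic statement that the partial-isometry word $W$ has a spectral gap at $e_{x_{0}}$, so that a functional-calculus projection onto the eigenspace for eigenvalue $1$ lands exactly on $e_{x_{0}}$ and nothing else. I would need to verify carefully that $e_{x_{0}}$ is the unique fixed vector of $W^{*}W$ restricted to the relevant finite cycle, using that minimality ensures the cycle is genuinely periodic of length $m$ and aperiodicity rules out the word collapsing at a proper sub-period. Once a single $e_{x_{0}} \in \M$ is secured, I would conclude: since $\M$ is reducing it contains $\overline{C^{*}(T_{1},\cdots,T_{k})e_{x_{0}}}$, which by Lemma \ref{RedSubOrb} equals $\overline{\Span\{e_{y} \mid y \in \Orb(x_{0};f)\}}$, and this equals $\H$ by minimality. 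Hence $\M = \H$, and $C^{*}(T_{1},\cdots,T_{k})$ is irreducible.
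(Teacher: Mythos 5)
Your overall plan is the right one, and it matches the strategy behind the paper's argument (the paper itself defers the detailed proof to \cite{Mo25AOT}, but the device is visibly the same one used here in the proof of Theorem \ref{Bij}): from a nonzero reducing subspace $\M$ extract the single basis vector $e_{x_{0}}$, then apply Lemma \ref{RedSubOrb} and minimality to get $\M \supseteq \overline{C^{*}(T_{1},\cdots,T_{k})e_{x_{0}}} = \H$. However, there are two genuine gaps in your isolation step. First, you isolate the coefficient $c_{x_{0}}$ of your chosen $\xi \in \M$, but nothing guarantees $c_{x_{0}} \neq 0$; if it vanishes your limit is the zero vector and the proof stalls. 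You must first use minimality to move a nonzero coefficient onto $x_{0}$: pick $x$ with $\innpr{\xi}{e_{x}} \neq 0$; minimality gives $f^{l}(x) = f^{n}(x_{0})$, and by continuing around the cycle some $L$ with $f^{L}(x) = x_{0}$; letting $J$ be the itinerary of $x$ over those $L$ steps, one has $T_{J}^{*}e_{x_{0}} = e_{x}$ exactly (injectivity of each $f|_{X_{i}}$ means distinct basis vectors have orthogonal images under $T_{J}$, so there is no cancellation), whence $T_{J}\xi \in \M$ has $\innpr{T_{J}\xi}{e_{x_{0}}} = \innpr{\xi}{e_{x}} \neq 0$.

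Second, the proposed mechanism --- a ``spectral gap'' for $W = T_{i_{m}}\cdots T_{i_{1}}$ and a functional-calculus projection from $W^{*}W$ --- does not work as stated. Here $W^{*}W$ is already a diagonal projection, namely onto $\overline{\Span\{e_{y} \mid \hat{y} \text{ begins with } I\}}$, so its eigenvalue-$1$ eigenspace is typically infinite-dimensional and no spectral projection at a finite stage isolates $e_{x_{0}}$ (also note $W$ is a non-normal partial isometry, so functional calculus on $W$ itself is unavailable). The isolation must come from a limit as the word length tends to infinity: the projections $W^{*n}W^{n}$ decrease to the projection onto $\overline{\Span\{e_{y} \mid \hat{y} = I^{\infty}\}}$, and it is exactly here that minimality and aperiodicity combine --- any $y$ with full itinerary $I^{\infty}$ has forward orbit meeting the cycle of $x_{0}$ (minimality), a cycle point with itinerary $I^{\infty}$ must be $x_{0}$ itself (aperiodicity of $I$), and then injectivity of $f_{I}$ on itinerary-$I$ points forces $y = x_{0}$. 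Equivalently, $W^{n}b \to \innpr{b}{e_{x_{0}}}e_{x_{0}}$ weakly, since for $z \neq x_{0}$ the backward itinerary-$I$ chains from $z$ are either finite or consist of distinct points whose coefficients are square-summable; because $\M$ is norm-closed, hence weakly closed, the limit lies in $\M$. (If you instead phrase this via the limit projection, note it lies only in the weak closure of the algebra, so you must add that $P_{\M}$ belongs to the commutant, which is WOT-closed, so $\M$ is still invariant.) This is precisely the $P_{m}$-limit argument appearing in the paper's proof of Theorem \ref{Bij}; your sketch has the right ingredients but attributes the isolation to a finite-stage spectral property that fails.
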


\begin{example}

\begin{enumerate}[(i)]

\item	(The Collatz map) 
The Collatz map $f:\N \to \N$ is defined by: 

\begin{equation*}
f(n)= 
\begin{cases}
3n+1,	&	n:\text{odd}, \\ 
n/2,		&	n:\text{even}. 
\end{cases}
\end{equation*}

Let $X_{1}=\{n \in \N \mid n:\text{odd}\}$ and $X_{2}=\{n \in \N \mid n:\text{even}\}$. 
Since $X_{1} \cup X_{2}=\N$ and $f|_{X_{1}},f|_{X_{2}}$ are injective, $f$ satisfies the bounded condition. 
It follows that $f^{3}(1)=1, 1 \in X_{1}, f(1)=4 \in X_{2}, f^{2}(1)=2 \in X_{2}$, and $(1,2,2)$ is aperiodic. 
Hence, the Collatz map $f$ satisfies the separating condition for $1$. 

\item	($q x{+}1$-function for $q=\text{Mersenne numbers}$) 
Let $q \in \N$ be an odd number. 
We define the $q x{+}1$-function $f_{q}:\N \to \N$ as follows: 

\begin{equation*}
f_{q}(n)= 
\begin{cases}
q n+1,	&	n:\text{odd}, \\ 
n/2,		&	n:\text{even}. 
\end{cases}
\end{equation*}

By setting $X_{1}=\{n \in \N \mid n:\text{odd}\}, X_{2}=\{n \in \N \mid n:\text{even}\}$, $f_{q}$ satisfies the bounded condition. 

Assume that $q$ is a Mersenne number, i.e., $q=2^{m}-1$ for some $m \in \N$. 
It follows that $1 \in X_{1}, f(1)=2^{m} \in X_{2}, f^{2}(1)=2^{m-1} \in X_{2}, \cdots, f^{m}(1)=2 \in X_{2}$, and $f^{m+1}(1)=1$. 
Since $(1,2,\cdots,2)$ is aperiodic, $f_{q}$ satisfies the separating condition for $1$. 

\item	($3x{+}d$-function) 
Let $d \in \N$ be an odd number. We define the $3 x{+}d$-function $f_{d}:\N \to \N$ as follows: 

\begin{equation*}
f_{d}(n)= 
\begin{cases}
3 n+d,	&	n:\text{odd}, \\ 
n/2,		&	n:\text{even}. 
\end{cases}
\end{equation*}

By setting $X_{1}=\{n \in \N \mid n:\text{odd}\}, X_{2}=\{n \in \N \mid n:\text{even}\}$, $f_{d}$ satisfies the bounded condition. 
It follows that $d \in X_{1}, f_{d}(d)=4d \in X_{2}, f_{d}^{2}(d)=2d \in X_{2}$, and $f_{d}^{3}(d)=d$. 
Since $(1,2,2)$ is aperiodic, $f_{d}$ satisfies the separating condition for $d$. 
\end{enumerate}

\end{example}

We note that the separating condition for $f$ depends on the choice of partition in the bounded condition of $X$. 
When $X$ is a compact metric space and $f$ is homeomorphism on $X$, a Markov partition is defined by using its distance and topology. 
For more detail, see a textbook of general dynamical systems, e.g. \cite{AH94NHML}. 
A Markov partition induces a Markov shift, a symbolic dynamical system of a special kind called a shift of a finite type. 
For a topological Anosov homeomorphism of compact metric space and a Markov partition of the space, there exists a continuous surjection from induced symbolic dynamical system to the original dynamical system which satisfies a commutative diagram (Theorem 4.3.4. in \cite{AH94NHML}). 
The above result is proved by using the compactness of the phase space. 
But, distance and compactness can not apply for dynamical systems with discrete phase spaces. 
Therefore, the concept of a Markov partition of discrete phase spaces cannot be derived directly from the theory of topological dynamical systems. 

Although the bounded condition of $f$ is not directly related to a Markov partition, we have studied a condition which is closely related to a Markov shift and the associated Cuntz-Kriger algebra. 

\begin{definition}[{\cite[Definition 5.3.2]{Mo25AOT}}, Cuntz-Krieger condition]
When the following hold: 

\begin{enumerate}[(i)]
\item	$f$ is surjective; 
\item	For every $1 \leq j \leq k$, there exists $J \subseteq \{1,2,\cdots,k\}$ such that $f(X_{j})=\cup_{i \in J}X_{i}$, 
\end{enumerate}

we say that $f$ satisfies the Cuntz-Krieger condition. 
\end{definition}

If $f$ satisfies the Cuntz-Krieger condition, then there exist the Cuntz-Krieger algebra $\O_{A}$ and a surjective $*$-homomorphism $\phi:\O_{A} \to C^{*}(T_{1},T_{2},\cdots,T_{k})$ (see \cite[Proposition 5.3.3]{Mo25AOT}). 

Given the inclusion order of the sets, the family of all $f$-invariant sets and reducing subspaces for $C^{*}(T_{1},T_{2},\cdots,T_{k})$ can be regarded as complete lattices. 
The next theorem applies to any $f$ that satisfies the bounded condition. 

\begin{theorem} \label{Inj}
There exists an injective, order-preserving map from 

\begin{equation*}
\{K \subseteq X \mid \text{$K$ is a $f$-invariant set}\} 
\end{equation*}

to 

\begin{equation*}
\{\M \subseteq \H \mid \text{$\M$ is a reducing subspace for $C^{*}(T_{1},T_{2},\cdots,T_{k})$}\}. 
\end{equation*}

\end{theorem}

\begin{proof}
For a trivial $f$-invariant set $K=\emptyset \subseteq X$, let $\H_{K}=\{0\} \subseteq \H$ as a trivial reducing subspace. 

Let $K (\neq \emptyset) \subseteq X$ be a $f$-invariant set and define $\H_{K} \subseteq \H$ by: 

\begin{align*}
\H_{K}=\overline{\Span\{e_{x} \mid x \in K}\}. 
\end{align*}

For two $f$-invariant sets $K \neq K' \subseteq X$, by the above definition, it holds that $\H_{K} \neq \H_{K'}$. 
If $K \subseteq K'$, then $\H_{K} \subseteq \H_{K'}$. 

We will see that $\H_{K}$ is a reducing subspace for $C^{*}(T_{1},T_{2},\cdots,T_{k})$. 
Let $e_{x} \in \H_{K}$. 
By the definition of $T_{i}$, 

\begin{equation*}
T_{i}e_{x}, T_{i}^{*}e_{x} \in \Span\{e_{y} \in \H \mid y=f(x) \text{ or } y \in f^{-1}(x)\} 
\end{equation*}

for $1 \leq i \leq k$. 
Since $K$ is a $f$-invariant set, $f(x) \in K$ and $f^{-1}(x) \subseteq K$. 
Therefore, $C^{*}(T_{1},T_{2},\cdots,T_{k})e_{x} \subseteq \H_{K}$. 
It follows that $C^{*}(T_{1},T_{2},\cdots,T_{k})\H_{K} \subseteq \H_{K}$, and hence, $\H_{K}$ is a reducing subspace for $C^{*}(T_{1},T_{2},\cdots,T_{k})$. 
\end{proof}


\section{Uniqueness condition}


In this section, we introduce the concept of the uniqueness condition for analyzing a loop in a dynamical system. 
For $1 \leq j \leq k$, we let $f_{j}$ denote $f|_{X_{j}}$. 
For $x \in X$, if there exists $m \in \N$ such that $f^{m}(x)=x$, then $x$ is called a periodic point. 
In particular, when $m=1$, $f(x)=x$ is called a fixed point of $f$. 
A loop in the dynamical system $(X,f)$ is a finite sequence $\{x_{i}\}_{i=1}^{m} \subseteq X$ such that $f(x_{i})=x_{i+1}$ for $1 \leq i < m$ and $f(x_{m})=x_{1}$. 

Fix $x \in X$ and let $m \in \N$. 
Consider the set of indices, $\{i_{j}\}_{j=1}^{m} \subseteq \{1,2,\cdots,k\}$ satisfying $f^{j-1}(x) \in X_{i_{j}}$, the $m$-tuple 

\begin{align*}
I=(i_{1},i_{2},\cdots,i_{m}) \in \{1,2,\cdots,k\}^{m} 
\end{align*}

is uniquely determined for $x$. 

For any tuple $I=(i_{1},i_{2},\cdots,i_{m}) \in \{1,2,\cdots,k\}^{*}$, we use the notation $f_{I}$ and $T_{I}$ for the map $f_{i_{m}} \circ f_{i_{m-1}} \circ \cdots \circ f_{i_{1}}$ (if defined) and the bounded linear operator $T_{i_{m}} T_{i_{m-1}} \cdots T_{i_{1}}$. 
The following are then equivalent: 

\begin{enumerate}[(i)]
\item	$f$ has a periodic point; 
\item	$f_{I}$ has a fixed point for some $I \in \{1,2,\cdots,k\}^{*}$. 
\end{enumerate}

In this regard, we define the uniqueness condition for $f$. 

\begin{definition}[Uniqueness condition]
When $f_{I}$ (if defined) has at most one fixed point for every $I=(i_{1},i_{2},\cdots,i_{m}) \in \{1,2,\cdots,k\}^{*}$, 
we say that $f$ satisfies the uniqueness condition. 
\end{definition}

The following maps satisfy the uniqueness condition. 

\begin{example} \label{ExaUni}

\begin{enumerate}[(i)]

\item	($q x{+}d$-function) 
Let $q,d \in \N$ be odd numbers. 
We define the $q x{+}d$-function $f_{q,d}:\N \to \N$ as follows: 

\begin{equation*}
f_{q,d}(n)= 
\begin{cases}
q n+d,	&	n:\text{odd}, \\ 
n/2,		&	n:\text{even}. 
\end{cases}
\end{equation*}

By setting $X_{1}=\{n \in \N \mid n:\text{odd}\}$ and $X_{2}=\{n \in \N \mid n:\text{even}\}$, $f_{q,d}$ satisfies the bounded condition. 
Since $q n{+}d$ and $n/2$ are linear functions and $d \neq 0$, $(f_{q,d})_{I}(x)$ (if defined) is a linear function for every $I \in \{1,2\}^{*}$ satisfying $(f_{q,d})_{I}(x) \not\equiv x$ ($x \in \N$). 
Therefore, the equation $(f_{q,d})_{I}(x)=x$ has at most one solution where $x$ is a natural number. 
Consequently, $f_{q,d}$ satisfies the uniqueness condition. 

\item	($(\a,\b)$-system) 
Let $k \in \N_{>1}$ and $\a=(a_{1},a_{2},\cdots,a_{k-1}), \b=(b_{1},b_{2},\cdots,b_{k-1}) \in \N^{k-1}$. 
The function $f_{\a,\b}:\N \to \N$ is defined by: 

\begin{equation*}
f_{\a,\b}(n)= 
\begin{cases}
a_{i}n+b_{i},	&	n \equiv i \pmod{k}, \ 1 \leq i \leq k-1, \\ 
n/k,			&	n \equiv 0 \pmod{k}. 
\end{cases}
\end{equation*}

By setting $X_{i}=\{n \in \N \mid n \equiv i \pmod{k}\}$ where $1 \leq i \leq k$, $f_{\a,\b}$ satisfies the bounded condition. 
Since $a_{i}x+b_{i}$ and $x/k$ are linear functions and $b_{i} \neq 0$ for every $1 \leq i \leq k$, $(f_{\a,\b})_{I}(x)$ (if defined) is a linear function for every $I \in \{1,2,\cdots,k\}^{*}$ satisfying $(f_{a,b})_{I}(x) \not\equiv x$ ($x \in \N$). 
Therefore, the equation $(f_{\a,\b})_{I}(x)=x$ has at most one solution where $x$ is a natural number. 
Consequently, $f_{\a,\b}$ satisfies the uniqueness condition. 
\end{enumerate}

\end{example}

If $f$ satisfies the uniqueness condition and the dynamical system $(X,f)$ has a loop, then $f$ satisfies the separating condition for a point. 
According to Theorems \ref{IrrMin} and \ref{MinIrr}, such a dynamical system $(X,f)$ is minimal if and only if the associated $C^{*}$-algebra is irreducible. 
We prove the relationship between the uniqueness and separating conditions for maps in the following proposition. 

\begin{proposition} \label{UniFix}
Suppose that $f$ satisfies the uniqueness condition. 
For every $I \in \{1,2,\cdots,k\}^{*}$ and $m \in \N$, if $f_{I}$ and $f_{I}^{m}$ are defined, then the following are equivalent: 

\begin{enumerate}[(i)]
\item	$f_{I}$ has a fixed point $x$; 
\item	$f_{I}^{m}$ has a fixed point $x$. 
\end{enumerate}

In particular, if $f$ has a periodic point $x$, then there exists an aperiodic tuple $I \in \{1,2,\cdots,k\}^{*}$ satisfying that $f_{I}$ has a fixed point $x$. 
\end{proposition}

\begin{proof}
If $x \in X$ is a fixed point of $f_{I}$, then $f_{I}^{m}(x)=x$. 
Hence, $(i) \implies (ii)$ holds. 

Assume that $x \in X$ is a fixed point of $f_{I}^{m}$. 
It follows that 

\begin{align*}
f_{I}^{m}(f_{I}(x))=f_{I}(f_{I}^{m}(x))=f_{I}(x). 
\end{align*}

Thus, $f_{I}(x)$ is a fixed point of $f_{I}^{m}$. 
By the uniqueness condition of $f$, $f_{I}^{m}$ has at most one fixed point, and hence $f_{I}(x)=x$. 
Therefore, $x$ is a fixed point of $f_{I}$ and $(ii) \implies (i)$. 

Let $f$ has a periodic point $x$ and $m$ be a minimum positive integer such that $f^{m}(x)=x$. 
There uniquely exists $I=(i_{1},i_{2},\cdots,i_{m}) \in \{1,2,\cdots,k\}^{*}$ satisfying that $f^{n-1}(x) \in X_{i_{n}}$ for every $1 \leq n \leq m$. 
We will show that a contradiction arises if we assume that $I$ is periodic. 

Assume that $0<l<m$ and that $J=(j_{1},j_{2},\cdots,j_{l}) \in \{1,2,\cdots,k\}^{*}$ such that 

\begin{equation*}
I=(j_{1},j_{2},\cdots,j_{l},j_{1},j_{2},\cdots,j_{l},\cdots,j_{1},j_{2},\cdots,j_{l}). 
\end{equation*}

It follows that
 
\begin{align*}
x=f^{m}(x)=f_{I}(x)=f_{J}^{m/l}(x). 
\end{align*}

Hence, $f_{J}(x)=x$. 
However, this contradicts the assumption that $m$ is the minimum positive integer such that $f^{m}(x)=x$. 
Therefore, $I$ is aperiodic. 
\end{proof}

\begin{corollary}
Suppose that $x \in X$ is a periodic point of $f$. 
If $f$ satisfies the uniqueness condition, then it also satisfies the separating condition for $x$. 
\end{corollary}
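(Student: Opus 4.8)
The plan is to verify the two clauses of the separating condition (Definition \ref{SepCon}) in turn, handling the aperiodicity clause exactly as in the preceding corollary but carried out in the present general setting, so that the whole statement reduces to Theorem \ref{UniFix}.

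First I would dispose of clause (i): since $x$ is a periodic point, by definition there exists $m \in \N$ with $f^{m}(x)=x$, so clause (i) holds immediately. I then fix the minimal such $m$ and form the tuple $I=(i_{1},i_{2},\cdots,i_{m}) \in \{1,2,\cdots,k\}^{*}$ determined by $f^{j-1}(x) \in X_{i_{j}}$ for $1 \leq j \leq m$. Along the orbit of $x$ each restriction $f_{i_{j}}$ agrees with $f$, so $f_{I}(x)=f^{m}(x)=x$; in particular $f_{I}$ is defined and has $x$ as a fixed point. This is the datum that lets me feed into the uniqueness machinery.

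It remains to prove clause (ii), namely that $I$ is aperiodic, and here I would argue by contradiction just as in the previous proof. Suppose $I$ is periodic: there exist $0<l<m$ (necessarily with $l \mid m$) and $J=(j_{1},\cdots,j_{l})$ with $I$ equal to the $m/l$-fold concatenation $(J,J,\cdots,J)$. Tracking the composition order, the block structure of $I$ yields $f_{I}=f_{J}^{m/l}$, so $f_{J}^{m/l}(x)=f_{I}(x)=x$ and $x$ is a fixed point of $f_{J}^{m/l}$. Invoking Theorem \ref{UniFix} (which applies because $f$ satisfies the uniqueness condition) with the tuple $J$ and exponent $m/l$, the existence of a fixed point of $f_{J}^{m/l}$ forces $f_{J}$ to have a fixed point, and the uniqueness condition pins that fixed point down to $x$ itself, whence $f_{J}(x)=x$. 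But $f_{J}(x)=f^{l}(x)$ by the same orbit-matching used above, so $f^{l}(x)=x$ with $0<l<m$, contradicting the minimality of $m$. Therefore $I$ is aperiodic and clause (ii) holds, completing the separating condition for $x$.

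The main (and essentially only) obstacle is the passage from the periodicity of the index tuple to the iteration identity $f_{I}=f_{J}^{m/l}$ together with the correct application of Theorem \ref{UniFix} to conclude $f_{J}(x)=x$ rather than merely the existence of some fixed point. Once the composition order is tracked carefully, both points are routine, and the corollary follows directly from the uniqueness condition via Theorem \ref{UniFix}.
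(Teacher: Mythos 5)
Your proof is correct and takes essentially the same route as the paper: the paper's own proof just cites Definition \ref{SepCon} and Theorem \ref{UniFix}, the substance being exactly the contradiction argument you spell out (a periodic $I=(J,\dots,J)$ gives $x=f_{I}(x)=f_{J}^{m/l}(x)$, and Theorem \ref{UniFix} together with the uniqueness condition forces $f_{J}(x)=x=f^{l}(x)$, contradicting the minimality of $m$), which is precisely the argument carried out in the corollary preceding this one. Your extra care in noting that the fixed point of $f_{J}$ must be $x$ itself matches the paper's implicit use of the fact that $f_{J}^{m/l}$ has at most one fixed point.
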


\begin{proof}
This is a consequence of Definition \ref{SepCon} and Proposition \ref{UniFix}. 
\end{proof}

For the Collatz map, the tuple of parities for every loop is aperiodic. 

\begin{corollary}
Let $q,d \in \N$ be odd numbers and let $f_{q,d}$ be the $q x{+}d$-function. 
Suppose that $x \in \N$ is a periodic point of $f_{q,d}$ and $m \in \N$ is the minimum positive integer such that $f_{q,d}^{m}(x)=x$. 
Define the set $\{i_{j}\}_{j=1}^{m} \subseteq \{1,2\}$ by: 

\begin{equation*}
i_{j}= 
\begin{cases}
1,	&	f_{q,d}^{j-1}(x):\text{odd}, \\ 
2,	&	f_{q,d}^{j-1}(x):\text{even}. 
\end{cases}
\end{equation*}

In the above settings, $I=(i_{1},i_{2},\cdots,i_{m})$ is aperiodic. 
In particular, if $n \in \N$ is a periodic point for the Collatz map $f$ and $m \in \N$ is the minimum positive integer such that $f^{m}(n)=n$, then the $m$-tuple 

\begin{equation*}
((-1)^{n},(-1)^{f(n)},\cdots,(-1)^{f^{m-1}(n)}) 
\end{equation*}

is aperiodic. 
\end{corollary}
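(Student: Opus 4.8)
The plan is to recognize this corollary as the concrete instance of the final assertion of Theorem~\ref{UniFix} applied to $f_{q,d}$, tracking the specific tuple $I$ attached to $x$ rather than merely asserting existence. First I would invoke Example~\ref{ExaUni}(i) to note that $f_{q,d}$ satisfies the uniqueness condition, so that Theorem~\ref{UniFix} is available. Next I would record the basic fact that, because $f_{q,d}^{j-1}(x) \in X_{i_j}$ for $1 \le j \le m$ and $f_{q,d}^{m}(x)=x$, the point $x$ is a fixed point of $f_{I}$: the composition $f_{I}=f_{i_m}\circ\cdots\circ f_{i_1}$ is defined along the orbit segment $x, f_{q,d}(x),\dots,f_{q,d}^{m-1}(x)$ and satisfies $f_{I}(x)=f_{q,d}^{m}(x)=x$.

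I would then argue by contraposition. Suppose $I$ is \emph{not} aperiodic, so some cyclic shift $\sigma^{j}$ with $1 \le j < m$ fixes $I$. The set of shifts fixing $I$ is a subgroup of $\Z/m\Z$, hence is generated by its least positive element $p$, which divides $m$ and satisfies $1 \le p < m$. Writing $t=m/p$ and $J=(i_1,\dots,i_p)$, the tuple decomposes as $I=J^{t}$, i.e.\ $I$ is the concatenation of $t$ copies of $J$. The key bookkeeping step is that this combinatorial factorization passes to the maps: reading $f_{I}$ as $f_{i_1},\dots,f_{i_m}$ applied in order, the repetition $I=J^{t}$ yields the identity $f_{I}=f_{J}^{\,t}$, and moreover $f_{J}$ is defined at $x$ with $f_{J}(x)=f_{q,d}^{p}(x)$ because the first $p$ indices read off the orbit of $x$ are exactly $J$.

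With $x$ a fixed point of $f_{J}^{\,t}=f_{I}$, Theorem~\ref{UniFix} (applied with $J$ in place of $I$ and exponent $t$ in place of $m$) gives that $f_{J}$ itself has a fixed point $y$. Since $f_{J}(y)=y$ forces $f_{J}^{\,t}(y)=y$, the point $y$ is also a fixed point of $f_{J}^{\,t}$; by the uniqueness condition $f_{J}^{\,t}$ has at most one fixed point, so $y=x$. Hence $f_{J}(x)=x$, that is $f_{q,d}^{p}(x)=x$ with $p<m$, contradicting the minimality of $m$. Therefore $I$ is aperiodic. For the Collatz special case $q=3$, $d=1$, the parity tuple $((-1)^{n},(-1)^{f(n)},\dots,(-1)^{f^{m-1}(n)})$ is the entrywise image of $I$ under the bijection $1\mapsto -1$, $2\mapsto 1$; since applying a fixed bijection entrywise commutes with cyclic shifts, it preserves aperiodicity, and the claim for $f$ follows at once.

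The step I expect to be the main obstacle is the bookkeeping in the middle paragraph: making precise that the factorization $I=J^{t}$ of tuples induces the dynamical factorization $f_{I}=f_{J}^{\,t}$, together with the domain check that $f_{J}$ and each of its iterates are genuinely defined along the orbit of $x$, so that indeed $f_{J}(x)=f_{q,d}^{p}(x)$. Everything else---the subgroup-of-$\Z/m\Z$ description of the periods and the uniqueness-driven identification $y=x$---is routine once Theorem~\ref{UniFix} and Example~\ref{ExaUni}(i) are in hand.
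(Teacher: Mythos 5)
Your proposal is correct and takes essentially the same route as the paper's own proof: assume $I$ is periodic, factor it as $I=J^{t}$, use Theorem~\ref{UniFix} together with the uniqueness condition (via Example~\ref{ExaUni}(i)) to conclude $f_{J}(x)=f_{q,d}^{p}(x)=x$, contradicting the minimality of $m$, and handle the Collatz case by noting the parity tuple is the image of $I$ under a symbol bijection that commutes with cyclic shifts. Your extra bookkeeping (the subgroup-of-$\Z/m\Z$ argument for the least period, and the explicit identification $y=x$ of the fixed points of $f_{J}$ and $f_{J}^{t}$) only spells out details the paper leaves implicit.
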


\begin{proof}
When $q = 3$ and $d = 1$, $f_{q,d}$ is equal to the Collatz map and it satisfies the uniqueness condition. 
Suppose $n \in \N$ is a periodic point of $f_{3,1}$ and $m \in \N$ is the minimum positive integer such that $f_{3,1}^{m}(n)=n$. 
Define $I=(i_{1},i_{2},\cdots,i_{m})$ as the statement and $J=((-1)^{n},(-1)^{f_{3,1}(n)},\cdots,(-1)^{f_{3,1}^{m-1}(n)})$. 
From the above proposition, $I$ is aperiodic. 
Since, for $1 \leq j < m$, $i_{j}=1$ if and only if $f_{3,1}^{j-1}(n)$ is odd, it follows that $f_{3,1}^{j-1}(n)=1$ if and only if $(-1)^{f_{3,1}^{j-1}(n)}=-1$. 
Similarly, $i_{j}=2$ if and only if $(-1)^{f_{3,1}^{j-1}(n)}=1$. 
This implies that $I$ is aperiodic if and only if $J$ is aperiodic. 
Hence, $J$ is aperiodic. 
\end{proof}

The next result applies to any map that satisfies the bounded condition,  regardless of whether the uniqueness condition is assumed. 

\begin{lemma}
Let $I \in \{1,2,\cdots,k\}^{*}$. 
If $f_{I}$ has a fixed point, then $T_{I}$ has a non-zero fixed point. 
\end{lemma}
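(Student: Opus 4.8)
The plan is to produce the fixed point explicitly: if $x \in X$ is a fixed point of $f_I$, I claim that the basis vector $e_x$ is a non-zero fixed point of $T_I$. First I would unpack what $f_I(x) = x$ means for $I = (i_1, i_2, \ldots, i_m)$. Because $f_I = f_{i_m} \circ f_{i_{m-1}} \circ \cdots \circ f_{i_1}$ is defined at $x$, each intermediate image must lie in the domain of the next factor; that is, $x \in X_{i_1}$, $f(x) = f_{i_1}(x) \in X_{i_2}$, and inductively $f^{j-1}(x) \in X_{i_j}$ for every $1 \leq j \leq m$. Moreover $f_I(x) = f^{m}(x)$, so the hypothesis $f_I(x) = x$ reads $f^{m}(x) = x$.

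Next I would evaluate $T_I e_x = T_{i_m} T_{i_{m-1}} \cdots T_{i_1} e_x$ by applying the partial isometries one at a time, starting from the rightmost factor. Using the defining formula for $T_i$ together with $x \in X_{i_1}$, the first factor gives $T_{i_1} e_x = e_{f(x)}$. Proceeding inductively, once the partial product has carried $e_x$ to $e_{f^{j-1}(x)}$ and we know $f^{j-1}(x) \in X_{i_j}$, the next factor yields $T_{i_j} e_{f^{j-1}(x)} = e_{f^{j}(x)}$; in particular, no vector is sent to $0$ along the way. After the final factor this computation terminates with $T_I e_x = e_{f^{m}(x)} = e_x$. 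Since $e_x$ is a member of the C.O.N.S.\ it is non-zero, so it is the required fixed point.

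The argument is essentially a direct computation, and I do not anticipate a real obstacle. The only point deserving attention is verifying that each factor $T_{i_j}$ carries its incoming basis vector forward rather than annihilating it; this is precisely what the membership conditions $f^{j-1}(x) \in X_{i_j}$ guarantee, and those conditions are in turn forced by the fact that $f_I$ is defined at the fixed point $x$.
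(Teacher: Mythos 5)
Your proposal is correct and follows the same route as the paper: take a fixed point $x$ of $f_I$ and verify that $e_x$ is a non-zero fixed point of $T_I$. The paper compresses the computation into the single line $0 \neq T_I e_x = e_{f_I(x)} = e_x$, while you spell out the intermediate memberships $f^{j-1}(x) \in X_{i_j}$ that justify it, which is exactly the implicit content of that line.
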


\begin{proof}
Let $x$ be the fixed point of $f_{I}$. 
Then, 
\begin{equation*}
0 \neq T_{I}e_{x}=e_{f_{I}(x)}=e_{x}. 
\end{equation*}
Thus, $T_{I}$ has a non-zero fixed point. 
\end{proof}

The uniqueness condition of a map leads that the converse of the above lemma holds. 

\begin{theorem}
Assume that $f$ satisfies the uniqueness condition and that $I \in \{1,2,\cdots,k\}^{*}$ satisfies the condition that $T_{I} \neq 0$. 
If $T_{I}$ has a non-zero fixed point, then $f_{I}$ has a fixed point. 
Furthermore, if $x \in X$ is the fixed point of $f_{I}$, then the set of all fixed points of $T_{I}$ is $\{\alpha e_{x} \mid \alpha \in \C\}$. 
\end{theorem}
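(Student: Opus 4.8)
The plan is to expand a putative non-zero fixed vector in the orthonormal basis, read off what the equation $T_I\xi=\xi$ says coefficient-by-coefficient, and then combine $\ell^{2}$-summability with the uniqueness condition via Theorem \ref{UniFix}.

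First I would record the explicit action of $T_I$. Writing $D_I=\{x \in X : x \in X_{i_1},\ f(x) \in X_{i_2},\ \ldots,\ f^{m-1}(x) \in X_{i_m}\}$ for the admissible domain of $I=(i_1,\ldots,i_m)$, one has $T_I e_x = e_{f_I(x)}$ for $x \in D_I$ and $T_I e_x = 0$ otherwise. Moreover $f_I$ restricted to $D_I$ is injective: since each $f|_{X_i}$ is injective, equality $f^m(x)=f^m(x')$ can be peeled off one index at a time from the right to give $x=x'$. Consequently, for $\xi=\sum_{y} c_y e_y$, the identity $T_I\xi=\xi$ is equivalent to the two coefficient conditions $c_y=0$ whenever $y \notin f_I(D_I)$, and $c_{f_I(x)}=c_x$ for every $x \in D_I$.

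The core step, which I expect to be the main obstacle, is a backward-orbit argument producing a periodic point. Starting from any $y_0$ with $c_{y_0}\neq 0$, the first condition forces $y_0 \in f_I(D_I)$, so $y_0$ has a unique preimage $y_{-1}\in D_I$ under $f_I$, and the second condition gives $c_{y_{-1}}=c_{y_0}\neq 0$. Iterating yields an infinite backward sequence $(y_{-j})_{j\geq 0}$ in $D_I$ whose coefficients all equal $c_{y_0}$. Since $\sum_{y}\abs{c_y}^{2}<\infty$, only finitely many indices can carry a fixed non-zero modulus, so the sequence must repeat: $y_{-a}=y_{-b}$ for some $a<b$, whence $y_{-b}$ is a fixed point of $f_I^{\,b-a}$. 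Applying Theorem \ref{UniFix} under the uniqueness condition, $f_I$ itself has a fixed point, which proves the first assertion. (The hypothesis $T_I\neq 0$ together with the existence of a non-zero fixed vector guarantees $D_I\neq\emptyset$, so nothing is vacuous here.)

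Finally, for the characterization I would exploit uniqueness of the fixed point. The uniqueness condition gives that $f_I$ has at most one fixed point, so let $x$ denote the fixed point from the previous step. The repetition shows $y_{-b}$ is a fixed point of $f_I$, hence $y_{-b}=x$; running the chain forward, $f_I^{\,b}(y_{-b})=y_0$ and $f_I(x)=x$ together force $y_0=x$. Thus every index $y_0$ with $c_{y_0}\neq 0$ coincides with $x$, so $\xi=c_x e_x$. Conversely, each $\alpha e_x$ is fixed since $T_I e_x = e_{f_I(x)} = e_x$, and therefore the set of all fixed points of $T_I$ is exactly $\{\alpha e_x \mid \alpha\in\C\}$. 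I expect the only delicate points to be the injectivity of $f_I$ on $D_I$ and the finiteness argument manufacturing the periodic point; both become routine once the two coefficient conditions are established.
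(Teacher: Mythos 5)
Your proof is correct, but it takes a genuinely different route from the paper's. The paper first upgrades the fixed vector to a co-fixed vector: since $\norm{T_{I}} \leq 1$, the equation $T_{I}a=a$ forces $T_{I}^{*}a=a$ (a standard contraction argument), and then pairs $a$ against \emph{forward} iterates, $0 \neq \innpr{a}{e_{x}} = \innpr{a}{T_{I}^{m}e_{x}} = \langle a, e_{f_{I}^{m}(x)}\rangle$, so the forward orbit of each support point stays in the support; $\ell^{2}$-summability makes it eventually periodic, and the uniqueness condition plus injectivity of $f_{I}^{M}$ yields $f_{I}(x)=x$ directly for \emph{every} support point $x$, which gives both assertions at once. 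You instead avoid the adjoint and the contraction inequality entirely: you read $T_{I}\xi=\xi$ coefficientwise on the admissible domain $D_{I}$, use injectivity of $f_{I}|_{D_{I}}$ to get unique \emph{backward} preimages with constant coefficients, and let $\ell^{2}$-pigeonhole close the backward chain into a cycle, after which Theorem \ref{UniFix} and the uniqueness condition collapse the support to the single point $x$. Your route is more elementary and makes the domain/support structure explicit; the paper's adjoint trick is slicker and transfers to settings where coefficient bookkeeping is awkward. One small elision: ``the repetition shows $y_{-b}$ is a fixed point of $f_{I}$'' is not literally what the repetition shows --- it shows $y_{-b}$ is a fixed point of $f_{I}^{\,b-a}$, and you need one more line (the fixed point $x$ of $f_{I}$ is also a fixed point of $f_{I}^{\,b-a}$, which by the uniqueness condition applied to the concatenated tuple has at most one fixed point, whence $y_{-b}=x$); this is exactly the argument inside the proof of Theorem \ref{UniFix}, so the gap is presentational rather than substantive.
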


\begin{proof}
Let $a \in \H\backslash\{0\}$ be a fixed point of $T_{I}$. 
Since $\norm{T_{I}} \leq 1$, it holds that $\norm{T_{I}^{*}a} \leq \norm{a}$, and so, 

\begin{align*}
&\norm{T_{I}^{*}a-a}^2 
=\norm{T_{I}^{*}a}^2-\innpr{a}{T_{I}a}-\innpr{T_{I}a}{a}+\innpr{a}{a}=\norm{T_{I}^{*}a}^2-\innpr{a}{a}-\innpr{a}{a}+\innpr{a}{a}\\ 
&=\norm{T_{I}^{*}a}^2-\innpr{a}{a}=\norm{T_{I}^{*}a}^2-\norm{a}^2 \leq \norm{a}^2-\norm{a}^2=0. 
\end{align*}

We obtain that $T_{I}^{*}a=a$. 
Let $x \in X$ satisfy $\innpr{a}{e_{x}} \neq 0$. 
Then, 

\begin{align*}
0 \neq \innpr{a}{e_{x}}=\innpr{(T_{I}^{*})^{m} a}{e_{x}}=\innpr{a}{T_{I}^{m} e_{x}} 
\end{align*}

for every $m \in \N$. 
It follows that $T_{I}^{m} e_{x} \neq 0$ and $T_{I}^{m} e_{x}=e_{f_{I}^{m}(x)}$. 
Since 

\begin{equation*}
\norm{a}^2=\sum_{y \in X}\abs{\innpr{a}{e_{y}}}^2 < +\infty, 
\end{equation*}

the set $\{e_{f_{I}^{m}(x)}\}_{m \in \N}$ is finite. 
Hence, there exist $M < N \in \N$ such that $f_{I}^{M}(x)=f_{I}^{N}(x)$. 
Let $m=N-M$. 
Then, $f_{I}^{m}(f_{I}^{M}(x))=f_{I}^{N}(x)=f_{I}^{M}(x)$. 
Thus, $f_{I}^{m}$ has a fixed point. 
Similarly, $f_{I}^{m}(f_{I}^{M+1}(x))=f_{I}^{m+1}(f_{I}^{M}(x))=f_{I}(f_{I}^{M}(x))=f_{I}^{M+1}(x)$. 
By the uniqueness condition of $f$, 

\begin{equation*}
f_{I}^{M}(f_{I}(x))=f_{I}^{M+1}(x)=f_{I}^{M}(x). 
\end{equation*}

Since $f_{I}^{M}$ is injective, $f_{I}(x)=x$. 
Hence, $x$ is the fixed point of $f_{I}$. 

Conversely, assume that $x \in X$ is a fixed point of $f_{I}$. 
Then, $e_{x}$ is a fixed point of $T_{I}$. 
Let $a \in \H\backslash\{0\}$ be a fixed point of $T_{I}$. 
If there is $y \in X\backslash\{x\}$ satisfying that $\innpr{a}{e_{y}} \neq 0$, then it holds that 

\begin{align*}
0 \neq \innpr{a}{e_{y}}=\innpr{(T_{I}^{*})^{m}a}{e_{y}}=\innpr{a}{T_{I}^{m}e_{y}}
=\langle a, e_{f_{I}^{m}(y)} \rangle 
\end{align*}

for every $m \in \N$. 
And similarly, it also holds that $f_{I}(y)=y$. 
Since $f_{I}(x)=x$, by using the assumption of the uniqueness condition of $f$, we obtain that $x=y$ which contradicts to the assumption that $x \neq y$. 
Thus, $a \in \Span\{e_{x}\}=\{\alpha e_{x} \mid \alpha \in \C\}$. 
\end{proof}


\section{Totally uniqueness condition}


Fix $x \in X$. 
For every $j \in \N$, there is a unique $i_{j} \in \{1,2,\cdots,k\}$ such that $f^{j-1}(x) \in X_{i_{j}}$. 
We define an infinite sequence $\hat{x} \in \{1,2,\cdots,k\}^{\N}$ as follows: 

\begin{equation*}
\hat{x}=(i_{j})_{j \in \N}=(i_{1},i_{2},i_{3},\cdots). 
\end{equation*}

\begin{definition}[Totally uniqueness condition]
When the map $X \ni x \mapsto \hat{x} \in \{1,2,\cdots,k\}^{\N}$ is injective, we say that $f$ satisfies the totally uniqueness condition. 
\end{definition}

The totally uniqueness condition is stronger than the uniqueness condition. 

\begin{proposition}
If $f$ satisfies the totally uniqueness condition, then $f$ satisfies the uniqueness condition. 
\end{proposition}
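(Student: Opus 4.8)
The plan is to argue directly by contradiction. Assuming $f$ satisfies the totally uniqueness condition, I would suppose that the uniqueness condition fails, so that for some $I=(i_{1},i_{2},\cdots,i_{m}) \in \{1,2,\cdots,k\}^{*}$ the map $f_{I}$ is defined and possesses two distinct fixed points $x,y \in X$. The goal is to show this forces $\hat{x}=\hat{y}$, contradicting the injectivity of $x \mapsto \hat{x}$ guaranteed by the totally uniqueness condition.

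The key observation is that a fixed point of $f_{I}$ has a completely prescribed itinerary. Since $f_{I}=f_{i_{m}} \circ f_{i_{m-1}} \circ \cdots \circ f_{i_{1}}$, for $f_{I}$ to be defined at $x$ we must have $x \in X_{i_{1}}$, $f(x) \in X_{i_{2}}$, $\ldots$, $f^{m-1}(x) \in X_{i_{m}}$, and then $f_{I}(x)=f^{m}(x)$. Hence a fixed point $x$ of $f_{I}$ satisfies both $f^{m}(x)=x$ and $f^{j-1}(x) \in X_{i_{j}}$ for $1 \leq j \leq m$. Because the $X_{i}$ are mutually disjoint and cover $X$, the symbol $i_{j}$ with $f^{j-1}(x) \in X_{i_{j}}$ is uniquely determined, so the first $m$ entries of $\hat{x}$ coincide with $I$.

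Next I would extend this to the whole sequence $\hat{x}$ using periodicity. From $f^{m}(x)=x \in X_{i_{1}}$ the $(m{+}1)$-st symbol of $\hat{x}$ equals $i_{1}$, and inductively $f^{m+l}(x)=f^{l}(x)$ shows that the itinerary repeats with period $m$; thus $\hat{x}=(i_{1},\cdots,i_{m},i_{1},\cdots,i_{m},\cdots)$, the infinite periodic repetition of $I$. Crucially, this expression depends only on $I$ and not on the chosen fixed point. Applying the identical reasoning to $y$ yields $\hat{y}=(i_{1},\cdots,i_{m},i_{1},\cdots,i_{m},\cdots)=\hat{x}$, and the injectivity of the itinerary map then forces $x=y$, contradicting $x \neq y$. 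Therefore every $f_{I}$ has at most one fixed point, which is exactly the uniqueness condition.

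I do not expect a serious obstacle here; the argument is essentially a bookkeeping of itineraries. The only point that genuinely requires care is the first half of the second paragraph, namely verifying that being a fixed point of $f_{I}$ truly pins down the initial $m$ symbols of $\hat{x}$ through the domain conditions $f^{j-1}(x) \in X_{i_{j}}$, since this is what licenses the periodic extension via $f^{m}(x)=x$. Once that is secured, the contradiction with injectivity is immediate.
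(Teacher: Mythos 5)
Your proposal is correct and takes essentially the same route as the paper: the paper's proof also takes two fixed points $x,y$ of $f_{I}$, asserts $\hat{x}=\hat{y}$, and concludes $x=y$ from the injectivity of the itinerary map. The only difference is that you spell out the step the paper leaves implicit, namely that the domain conditions $f^{j-1}(x)\in X_{i_{j}}$ together with $f^{m}(x)=x$ force $\hat{x}$ to be the periodic repetition of $I$, which is a worthwhile (and correctly executed) elaboration rather than a different argument.
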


\begin{proof}
Suppose $I=(i_{1},i_{2},\cdots,i_{m}) \in \{1,2,\cdots,k\}^{*}$ satisfies that there are $x,y \in X$ such that $f_{I}(x)=x$ and $f_{I}(y)=y$. 
Then, it follows that $\hat{x}=\hat{y}$. 
By the totally uniqueness condition of $f$, it holds that $x=y$. 
Therefore, $f$ satisfies the uniqueness condition. 
\end{proof}

When $f$ satisfies the totally uniqueness condition, we will show that the order-preserving map in Theorem \ref{Inj} is bijective. 
Therefore, for such an $f$, there is a one-to-one correspondence between the families of all $f$-invariant sets and all reducing subspaces for $C^{*}(T_{1},T_{2},\cdots,T_{k})$. 

\begin{theorem} \label{Bij}
If $f$ satisfies the totally uniqueness condition, then the injective, order-preserving map in Theorem \ref{Inj} is bijective. 
\end{theorem}

\begin{proof}
Suppose that $f$ satisfies the totally uniqueness condition. 
Let $\M \subseteq \H$ be a reducing subspace for $C^{*}(T_{1},T_{2},\cdots,T_{k})$. 
If $\M=\{0\}$, then it is the image of $\emptyset \subseteq X$ by the map. 

Assume that $\M \neq \{0\}$. 
Let $a \in \M\backslash\{0\}$ and $x \in X$ satisfy $\innpr{a}{e_{x}} \neq 0$. 
Let $\hat{x}=(i_{j})_{j \in \N} \in \{1,2,\cdots,k\}^{\N}$. 
By the totally uniqueness condition, for every $y \in X\backslash\{x\}$, there exists $j \in \N$ such that $f^{j-1}(x) \in X_{i_{j}}$ and $f^{j-1}(y) \notin X_{i_{j}}$. 
For $m \in \N$, define

\begin{equation*}
P_{m}=T_{i_{1}}^{*} T_{i_{2}}^{*} \cdots T_{i_{m}}^{*} T_{i_{m}} \cdots T_{i_{2}} T_{i_{1}} \in C^{*}(T_{1},T_{2},\cdots,T_{k}). 
\end{equation*}

Then, $P_{j}e_{x}=e_{x}$ and $P_{j}e_{y}=0$. 
This implies that 

\begin{equation*}
P_{m}a = P_{m}(\sum_{y \in X}\innpr{a}{e_{y}}e_{y}) \to \innpr{a}{e_{x}}e_{x} \text{ } (m \to \infty). 
\end{equation*}

Therefore, $e_{x} \in \overline{C^{*}(T_{1},T_{2},\cdots,T_{k})\M} \subseteq \M$,  and hence 

\begin{equation*}
\overline{\Span\{e_{x} \mid x \in X, \innpr{a}{e_{x}} \neq 0 \text{ for some }a \in \M\}} \subseteq \M. 
\end{equation*}

The converse of this inclusion relationship is also true. 
Thus, 

\begin{equation*}
\M=\overline{\Span\{e_{x} \mid x \in X, \innpr{a}{e_{x}} \neq 0 \text{ for some }a \in \M\}}. 
\end{equation*}

Let $K=\{x \in X \mid \innpr{a}{e_{x}} \neq 0 \text{ for some }a \in \M\}$. 
It is easy to check that $K$ is a $f$-invariant set, because $\M$ is a reducing subspace for $C^{*}(T_{1},T_{2},\cdots,T_{k})$. 
This implies $\M=\H_{K}$. 
Therefore, the map in Theorem \ref{Inj} is surjective. 
\end{proof}

Let us consider the $(\a,\b)$-system described in Example \ref{ExaUni} $(ii)$. 
If $k=2$, $\a=(a_{1})=(q)$, $\b=(b_{1})=(d)$ where $q,d \in \N$ are arbitrary odd numbers, then $f_{\a,\b}$ is the $q x{+}d$-function. 
At this time, it follows that $a_{1}=q$ is relatively prime to $k$ and, $\{n,f_{q,d}(n)\}$ includes a multiple of $2$ for every $n \in \N$. 

For $k \geq 2$, in what follows, we assume the following conditions: 

\begin{enumerate}[(i)]
\item	$a_{i}$ is relatively prime to $k$ for every $1 \leq i \leq k$. 
\item	$\{n,f_{\a,\b}(n),\cdots,f_{\a,\b}^{k-1}(n)\}$ includes a multiple of $k$ for every $n \in \N$. 
\end{enumerate}

Let $f=f_{\a,\b}$ and set $X_{i}=\{n \in \N \mid n \equiv i \pmod{k}\}$ where $1 \leq i \leq k$. 
Next, let us consider the ring 

\begin{equation*}
X=\projlim_{n\to\infty}\Z/k^{n}\Z. 
\end{equation*}

By considering the discrete topology on $\Z/k^{n}\Z$, we can assume that $\prod_{n \in \N}\Z/k^{n}\Z$ is a topological space with the product topology and we define the relative topology on $X$ of the product space. 
We let $\pi_{n}:X\to\Z/k^{n}\Z$ denote the canonical ring homomorphism. 
The map $f:\N\to\N$ extends naturally to a continuous map $f:X\to X$, namely 

\begin{equation*}
f(x)= 
\begin{cases}
a_{i}x+b_{i}	&	\pi_1(x) \equiv i \pmod k,\ 1 \leq i \leq k-1, \\ 
x/k			&	\pi_1(x) \equiv 0 \pmod k. 
\end{cases}
\end{equation*}

Since $a_{i}$ is relatively prime to $k$ for every $i=1,2,\dots,{k-1}$, one can verify that $f$ becomes a local homeomorphism. 

For $x\in X$, we define $\hat{x} \in (\Z/k\Z)^{\N}$ by 

\begin{equation*}
\hat{x}(j)=\pi_{1}(f^{j-1}(x)) 
\end{equation*}

where $j \in \N$. 

\begin{lemma}
Let $x,y\in X$ and let $j \in \N$. 
Suppose $i:=\pi_{1}(x)=\pi_{1}(y)$ and $\pi_{j}(f(x))=\pi_{j}(f(y))$. 

\begin{enumerate}[(i)]
\item	When $i \neq 0$, we have $\pi_{j}(x)=\pi_{j}(y)$. 
\item	When $i=0$, we have $\pi_{j+1}(x)=\pi_{j+1}(y)$. 
\end{enumerate}

\end{lemma}

\begin{proof}
$(i)$\ 
One has $f(x)=a_{i}x+b_{i}$ and $f(y)=a_{i}y+b_{i}$ by the definition of $f$. 
Since $a_{i}$ and $k$ are coprime, $\pi_{j}(f(x))=\pi_{j}(f(y))$ implies $\pi_{j}(x)=\pi_{j}(y)$. 

$(ii)$\ 
We have $k f(x)=x$ and $k f(y)=y$ by the definition of $f$. 
This, together with $\pi_{j}(f(x))=\pi_{j}(f(y))$, 
implies $\pi_{j+1}(x)=\pi_{j+1}(y)$. 
\end{proof}

\begin{lemma}\label{MapInj}
The map $x \mapsto \hat{x}$, where $x \in X$, is injective. 
\end{lemma}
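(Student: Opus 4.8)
The plan is to show that $\hat{x}=\hat{y}$ forces $\pi_{n}(x)=\pi_{n}(y)$ for every $n$, which gives $x=y$ since the family $(\pi_{n})_{n}$ separates points of $X$. Rather than reasoning about $x,y$ alone, I would prove the stronger statement $P(n)$: for all $m\geq 0$ one has $\pi_{n}(f^{m}(x))=\pi_{n}(f^{m}(y))$; taking $m=0$ across all $n$ then yields the claim. The base case $P(1)$ is exactly the hypothesis $\hat{x}=\hat{y}$, since $\hat{x}(m+1)=\pi_{1}(f^{m}(x))$. Note also that, because whether a given step is a division step is detected by $\pi_{1}$, the hypothesis $\hat{x}=\hat{y}$ guarantees that $x$ and $y$ follow the same branch of $f$ at every iterate, so ``division step'' and ``non-division step'' are unambiguous positions.

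The inductive step $P(n)\Rightarrow P(n+1)$ is where the preceding lemma does the work, used in two complementary ways. Fix $m$. At a division step, i.e.\ an index $m'$ with $\pi_{1}(f^{m'}(x))=\pi_{1}(f^{m'}(y))=0$, applying the preceding lemma (case $i=0$) to $u=f^{m'}(x)$, $w=f^{m'}(y)$ with $j=n$ upgrades the level-$n$ agreement $\pi_{n}(f^{m'+1}(x))=\pi_{n}(f^{m'+1}(y))$ --- which is available from $P(n)$ --- to the level-$(n+1)$ agreement $\pi_{n+1}(f^{m'}(x))=\pi_{n+1}(f^{m'}(y))$. At a non-division step ($\pi_{1}\neq 0$), the preceding lemma (case $i\neq 0$) with $j=n+1$ instead \emph{propagates} level-$(n+1)$ agreement backward, from index $m''+1$ to index $m''$. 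Thus, to establish $\pi_{n+1}(f^{m}(x))=\pi_{n+1}(f^{m}(y))$ I would locate the nearest division step $m'\geq m$, create level-$(n+1)$ agreement there, and then walk backward through the necessarily non-division steps $m'-1,m'-2,\ldots,m$ to carry it down to $m$.

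For this walk to terminate at $m$ I need a division step to occur within a bounded window, and here condition (ii) is essential: it guarantees that among $f^{m}(x),f^{m+1}(x),\ldots,f^{m+k-1}(x)$ some iterate is a multiple of $k$, so $m\leq m'\leq m+k-1$. The hard part will be that condition (ii) is only \emph{assumed} for $n\in\N$, whereas the lemma concerns general $x\in X$; I therefore have to transfer it to $X$. I would do this by a digit-truncation argument: choosing $n_{0}\in\N$ with $\pi_{k}(n_{0})=\pi_{k}(x)$, one checks that $f^{j}(n_{0})\equiv f^{j}(x)\pmod{k}$ for $0\leq j\leq k-1$, because each affine step preserves the congruence level while each division step lowers it by exactly one, and in $j\leq k-1$ iterates at most $k-1$ divisions occur, so the level stays $\geq 1$. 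Hence the multiple of $k$ produced in the orbit of $n_{0}$ by condition (ii) appears at the same position in the orbit of $x$, which is precisely what the backward-propagation argument requires. Combining the induction with this transfer gives $P(n)$ for all $n$, and therefore the injectivity of $x\mapsto\hat{x}$.
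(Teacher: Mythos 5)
Your proof is correct and runs on the same engine as the paper's: the two-case lemma immediately preceding the statement, with division steps ($\pi_{1}=0$) upgrading the agreement level via case (ii) and non-division steps propagating agreement backward via case (i). The organization differs, though. The paper makes a single backward sweep: fixing $m$, it starts from the level-$1$ agreement $\pi_{1}(f^{mk}(x))=\pi_{1}(f^{mk}(y))$ and walks down to index $0$, gaining one level at each of the at least $m$ division steps in $\{0,1,\dots,mk-1\}$, concluding $\pi_{1+l}(x)=\pi_{1+l}(y)$ with $l\geq m$, and arbitrariness of $m$ finishes. You instead induct on the level $n$, proving the stronger invariant that $\pi_{n}(f^{m}(x))=\pi_{n}(f^{m}(y))$ holds at \emph{every} iterate $m$ simultaneously, which lets each inductive step work with a window of length at most $k$ rather than one long pass; the two arguments are logically equivalent reshufflings, yours trading a heavier induction hypothesis for shorter windows. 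One thing you supply that the paper glosses over: condition (ii) is stated only for $n\in\N$, yet both proofs need it for arbitrary points of $X$ (the paper cites it for natural numbers while counting division steps along the orbit of $x\in X$, without comment). Your digit-truncation transfer --- choose $n_{0}\in\N$ with $\pi_{k}(n_{0})=\pi_{k}(x)$, observe that affine steps preserve and division steps decrement the agreement level, so the level stays at least $1$ over $k-1$ iterates and hence $\pi_{1}(f^{j}(n_{0}))=\pi_{1}(f^{j}(x))$ for $0\leq j\leq k-1$ --- is exactly the missing justification and is correct; the only point worth recording explicitly is that such an $n_{0}$ exists because the positive integers surject onto $\Z/k^{k}\Z$.
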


\begin{proof}
Assume that there exist $x,y\in X$ such that $\hat{x}=\hat{y}$. 
For any $j \in\N$, 

\begin{equation*}
\pi_{1}(f^{j-1}(x))=\hat{x}(j)=\hat{y}(j)=\pi_{1}(f^{j-1}(y)) 
\end{equation*}

holds. 
Take $m\in\N$ arbitrarily. 
The cardinality of the set 

\begin{equation*}
\left\{j\in\{1,\dots,m k\}\mid \pi_{1}(f^{j-1}(x))=0\right\} 
\end{equation*}

is not less than $m$, 
because $\{n,f(n),f^{2}(n),\cdots,f^{k-1}(n)\}$ includes a multiple of $k$ for every natural number $n$. 
Hence, starting from $\pi_{1}(f^{m k}(x))=\pi_1(f^{m k}(y))$ and 
applying the lemma above inductively, 
we obtain $\pi_{1+l}(x)=\pi_{1+l}(y)$ for some $l \geq m$. 
As $m$ was arbitrary, one can conclude $x=y$. 
\end{proof}

\begin{proposition} \label{fqdSatTUC}
In the above settings, $f_{\a,\b}:\N \to \N$ satisfies the totally uniqueness condition. 
In particular, for every odd number $q,d \in \N$, the $q x{+}d$-function satisfies the totally uniqueness condition. 
\end{proposition}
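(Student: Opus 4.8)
The plan is to deduce the totally uniqueness condition for $f_{\a,\b}$ on $\N$ from the injectivity already established on the profinite ring $X$ in Lemma \ref{MapInj}, by realizing $\N$ as an $f$-invariant subset of $X$ on which the two symbolic codings agree.

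First I would record that the canonical ring homomorphism $\iota\colon\N\hookrightarrow X$ is injective, since $\bigcap_{n}k^{n}\Z=\{0\}$ for $k>1$, so that $\N$ may be regarded as a subset of $X$. Next I would verify that $\iota$ intertwines the map $f_{\a,\b}$ on $\N$ with the extended map $f$ on $X$, that is, $\iota\circ f_{\a,\b}=f\circ\iota$. For $n\not\equiv 0\pmod{k}$ this is immediate from the matching formulas $a_{i}n+b_{i}$. For $n\equiv 0\pmod{k}$ the only point requiring care is that division by $k$ in $\N$ coincides with division by $k$ in $X$; this follows because multiplication by $k$ is injective on $X$ (if $ky=0$ then $k\,\pi_{n}(y)=0$ in $\Z/k^{n}\Z$, forcing $\pi_{n}(y)\equiv 0\pmod{k^{n-1}}$ and hence $\pi_{n-1}(y)=0$ for all $n$, so $y=0$), whence the unique $y\in X$ with $ky=\iota(n)$ is precisely $\iota(n/k)$. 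Iterating gives $\iota(f_{\a,\b}^{j-1}(n))=f^{j-1}(\iota(n))$ for all $j$, so in particular $\iota(\N)$ is carried into itself by $f$.

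With equivariance in hand, I would compare the codings. For $n\in\N$ the $\N$-coding assigns the index $i_{j}\in\{1,\dots,k\}$ with $f_{\a,\b}^{j-1}(n)\in X_{i_{j}}$, i.e. $f_{\a,\b}^{j-1}(n)\equiv i_{j}\pmod{k}$ (with residue $0$ labelled $k$), whereas the $X$-coding of $\iota(n)$ is $\pi_{1}(f^{j-1}(\iota(n)))\in\Z/k\Z$. By the equivariance just proved, these two sequences correspond under the componentwise bijection $\{1,\dots,k\}\to\Z/k\Z$ sending $k\mapsto 0$ and $i\mapsto i$ otherwise. Since this bijection induces a bijection on sequence spaces, the $\N$-coding map $n\mapsto\hat{n}$ is injective if and only if its composite with the relabelling is, and the latter is the restriction to $\iota(\N)$ of the map shown injective in Lemma \ref{MapInj}. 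Hence $n\mapsto\hat{n}$ is injective on $\N$, which is exactly the totally uniqueness condition for $f_{\a,\b}$. For the $qx+d$-function I would then observe that it is the $(\a,\b)$-system with $k=2$, $\a=(q)$, $\b=(d)$, and verify the two standing hypotheses: (i) $a_{1}=q$ is odd, hence coprime to $k=2$; and (ii) for every $n$ the pair $\{n,f_{q,d}(n)\}$ contains an even number, since if $n$ is odd then $qn+d$ is a sum of two odd numbers and therefore even.

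The step I expect to be the main (though modest) obstacle is the $f$-equivariance of $\iota$ at the division branch: one must confirm that the formal expression $x/k$ defining $f$ on $X$ restricts on the copy of $\N$ to ordinary integer division, which rests on the injectivity of multiplication by $k$ on $X$. Everything else is bookkeeping, since the genuine content — injectivity of the coding — was already achieved in Lemma \ref{MapInj}.
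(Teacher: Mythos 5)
Your proof is correct and takes essentially the same route as the paper, whose entire argument is that, under the identification of $\{1,2,\cdots,k\}$ with $\Z/k\Z$, the injective coding map of Lemma \ref{MapInj} restricts to the map $n \mapsto \hat{n}$ on $\N$. The details you supply---injectivity of the embedding $\iota:\N\to X$, and the equivariance $\iota\circ f_{\a,\b}=f\circ\iota$ at the division branch via injectivity of multiplication by $k$ on $X$---merely make explicit what the paper's phrase ``can be regarded as an extension'' leaves implicit, and your verification of the two standing hypotheses for the $q x{+}d$ case matches the remark preceding the proposition.
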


\begin{proof}
By identifying sets $\{1,2,\cdots,k\}$ and $\Z/k\Z$, the map in Lemma \ref{MapInj} can be regarded as an extension of the map $n \mapsto \hat{n}$, where $n \in \N$. 
Since the former map is injective, so is the latter. 
\end{proof}

The next corollary follows from Theorem \ref{Bij} and the previous proposition. 

\begin{corollary} \label{fqdIrrMin}
Let $q,d \in \N$ be odd numbers and $f_{q,d}$ be the $q x{+}d$-function. 
Set $X_{1}=\{n \in \N \mid n:\text{odd}\}$ and $X_{2}=\{n \in \N \mid n:\text{even}\}$. 
Let $C^{*}(T_{1},T_{2})$ be the associated $C^{*}$-algebra. 
Then, the following  are equivalent: 

\begin{enumerate}[(i)]
\item	$C^{*}(T_{1},T_{2})$ is irreducible; 
\item	$(\N,f_{q,d})$ is minimal. 
\end{enumerate}

\end{corollary}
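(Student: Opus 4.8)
The plan is to derive this corollary directly from the bijection established in Theorem \ref{Bij}, after first verifying that its hypotheses are met. First I would invoke Proposition \ref{fqdSatTUC}, which guarantees that for odd numbers $q, d \in \N$ the $qx{+}d$-function $f_{q,d}$ satisfies the totally uniqueness condition. With the bounded condition supplied by the given partition $X_{1} \cup X_{2} = \N$ into odd and even integers, this places us precisely in the setting of Theorem \ref{Bij}, so the order-preserving map $K \mapsto \H_{K}$ from the family of $f_{q,d}$-invariant sets to the family of reducing subspaces for $C^{*}(T_{1},T_{2})$ is a bijection.

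Next I would record how this bijection acts on the trivial elements. By construction $\H_{\emptyset} = \{0\}$, and $\H_{X} = \overline{\Span\{e_{x} \mid x \in X\}} = \H$ since $\{e_{x}\}_{x \in X}$ is a C.O.N.S. for $\H$. Thus the two trivial $f_{q,d}$-invariant sets $\emptyset$ and $X$ correspond under the bijection exactly to the two trivial reducing subspaces $\{0\}$ and $\H$.

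The final step is to translate the definitions of minimality and irreducibility through this bijection. Since $K \mapsto \H_{K}$ is a bijection carrying $\{\emptyset, X\}$ onto $\{\{0\}, \H\}$, it restricts to a bijection between the non-trivial $f_{q,d}$-invariant sets and the non-trivial reducing subspaces for $C^{*}(T_{1},T_{2})$. Consequently the former family is empty if and only if the latter is. By definition, the emptiness of the family of non-trivial invariant sets is exactly the minimality of $(\N, f_{q,d})$, while the emptiness of the family of non-trivial reducing subspaces is exactly the irreducibility of $C^{*}(T_{1},T_{2})$; this yields the desired equivalence.

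I expect no serious obstacle here, since the heavy lifting is done by Theorem \ref{Bij} and Proposition \ref{fqdSatTUC}; the only point requiring care is confirming that the bijection genuinely matches trivial with trivial and non-trivial with non-trivial, rather than, say, sending a non-trivial invariant set to a trivial subspace. This is settled by the explicit images $\H_{\emptyset} = \{0\}$ and $\H_{X} = \H$ together with the injectivity of $K \mapsto \H_{K}$, which forces every non-trivial invariant set to have a non-trivial image.
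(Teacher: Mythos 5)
Your proposal is correct and follows essentially the same route as the paper: invoke Proposition \ref{fqdSatTUC} to verify the totally uniqueness condition, then apply the bijection of Theorem \ref{Bij} to identify irreducibility with minimality. The only difference is that you spell out the step the paper leaves implicit in its ``in particular'' --- namely that the bijection $K \mapsto \H_{K}$ matches trivial invariant sets with trivial reducing subspaces (via $\H_{\emptyset}=\{0\}$, $\H_{X}=\H$, and injectivity) --- which is a worthwhile clarification but not a different argument.
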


\begin{proof}
According to Proposition \ref{fqdSatTUC}, $f_{q,d}$ satisfies the totally uniqueness condition. 
This implies that there exists a one-to-one correspondence between the families of all $f_{q,d}$-invariant sets and all reducing subspaces for $C^{*}(T_{1},T_{2})$. 
In particular, $C^{*}(T_{1},T_{2})$ is irreducible if and only if $(\N,f_{q,d})$ is minimal. 
\end{proof}


\section{Symbolic representation}


Thus far, we have discussed dynamical systems that lack topological structure. 
In this section, we explore the possibility of incorporating topological structure to develop previous results further. 
Here, we open the possibility for topological analysis by linking the method of symbolic dynamics - a standard technique in the study of topological dynamical systems - with dynamical systems on discrete phase spaces. 
The totally uniqueness condition introduced in the previous section plays a crucial role in this endeavor. 

Before we delve into the main topic, let us review the theory of symbolic dynamical systems in the context of conventional phase dynamical systems. 
For standard references in this field, we recommand the textbook \cite{AH94NHML}. 
Much of the knowledge regarding topological dynamical systems in this section is based on this reference. 

Consider the general setting of a discrete-time dynamical system, which is defined by a compact metric space $X$ and a homeomorphism $f$ on it. 
The method of analyzing the properties of a given dynamical system by relating it to a system involving abstract sequences of symbols has long been employed in the study of individual dynamical systems. 
Much of this research was based on the goal of facilitating the analysis of the original phenomenon by linking smooth or continuous changes on differentiable manifolds or topological spaces to discrete objects, namely infinite sequences of abstract symbols. 
Markov partitions (a special partitioning method for dynamical systems) are an extremely important tool for carrying out this type of analysis. 

For a homeomorphism $f$ on a compact metric space $X$, a Markov partition is a finite cover $\{R_{1},R_{2},\cdots,R_{k}\}$ of $X$ such that 

\begin{enumerate}[(i)]
\item	each $R_{i}$ is a proper rectangle, 
\item	$\text{int}(R_{i}) \cap \text{int}(R_{j}) = \emptyset$ for $i \neq j$, 
\item	let $x \in \text{int}(R_{i}) \cap f^{-1}(\text{int}(R_{j}))$, then 

\begin{equation*}
f(V^{s}(x,R_{i}) \subseteq V^{s}(f(x),R_{j}) \text{ and } f(V^{u}(x,R_{i}) \supseteq V^{u}(f(x),R_{j}). 
\end{equation*}

\end{enumerate}

For a Markov partition $\{R_{1},R_{2},\cdots,R_{k}\}$, define the transition matrix $A=(A_{i,j})$ by 

\begin{equation*}
A(i,j)= 
\begin{cases}
1,	&	\text{int}(R_{i}) \cap f^{-1}(\text{int}(R_{j})) \neq \emptyset, \\ 
0,	&	\text{int}(R_{i}) \cap f^{-1}(\text{int}(R_{j})) = \emptyset. 
\end{cases}
\end{equation*}

The set of all bi-infinite sequences $x=(\cdots,x_{-1},x_{0},x_{1},\cdots)$ where $x_{n} \in \{1,2,\cdots,k\}$ becomes a compact metric space if we define the distance between $x,y \in X$ by $\dist(x,y)=0$ if $x=y$ and $\dist(x,y)=2^{-m}$ if $x \neq y$ where $m$ is the largest integer such that $x_{n}=y_{n}$ for all $n$ with $|n|<m$. 
Assume that there exists a Markov partition for $f$ and let $A$ be the associated transition matrix. 
Let $\Sigma_{A}$ be the compact subset of the above set defined by: 

\begin{align*}
\Sigma_{A}=\{x=(x_{i}) \mid A(x_{i},x_{i+1})=1 \text{ for } i \in \Z\} 
\end{align*}

and $\sigma:\Sigma_{A} \to \Sigma_{A}$ be the shift map defined by $\sigma((x_{i}))=(x_{i+1})$ as usual. 
Then, there exists a continuous surjection $\pi:\Sigma_{A} \to X$ with $\pi \circ \sigma = f \circ \pi$ such that $\pi^{-1}(x)$ has at most $d$ points for every $x \in X$, for a certain positive integer $d$. 
This implies that the homeomorphism $f$ behaves as the shift map $\sigma$. 
In this respect, the existence of a Markov partition is of paramount importance in dynamical systems theory. 
This correspondence to such desirable dynamical systems is called the symbolic representation of the original dynamical system. 
Simply put, the existence of a Markov partition guarantees the existence of a faithful representation from the dynamical system to the symbolical dynamics. 

On the other hand, when the initial assumption of a compact metric space and invertible map on it does not hold, various situations arise that these theories cannot cover. 
Our dynamical system also belongs to this class of cases that standard topological dynamical system theory cannot handle. 
When discussing settings that deviate from such standard cases, it is essential to carefully examine proofs starting from elementary assertions. 

From now on, we discuss a symbolic representation for a dynamical system with discrete phase space. 
We will write $\hat{X}$ as the set of all $\hat{x}$ where $x \in X$. 
Let $x \in X$ and $\hat{x}=(\hat{x}(1),\hat{x}(2),\hat{x}(3),\cdots) \in \{1,2,\cdots,k\}^{\N}$. 
By the definition of $\hat{x}$, it follows that $\widehat{f(x)}=(\hat{x}(2),\hat{x}(3),\hat{x}(4),\cdots)$. 

Let $\sigma:\hat{X} \to \hat{X}$ defined by: 

\begin{equation*}
\sigma((\hat{x}(j))_{j \in \N})=(\hat{x}(j+1))_{j \in \N}, 
\end{equation*}

and set $\hat{X}_{i}=\{\hat{x} \in \{1,2,\cdots,k\}^{\N} \mid x \in X_{i}\}$ where $1 \leq i \leq k$. 
Then, $\sigma$ satisfies the bounded condition and we call $(\hat{X},\sigma)$ the symbolic dynamics of $(X,f)$. 
$\sigma$ is called the shift map on $\hat{X}$. 

Define a map $\pi : X \to \hat{X}$ by: 

\begin{align*}
X \ni x \mapsto \pi(x)=\hat{x} \in \hat{X}. 
\end{align*}

This canonical map satisfies the following property. 

\begin{lemma} \label{MapSurHom}
$\pi$ is a surjective map from $X$ to $\hat{X}$ such that $\pi \circ f=\sigma \circ \pi$ and $\pi(X_{i})=\hat{X}_{i}$ for every $1 \leq i \leq k$. 
\end{lemma}

For $(X,f)$, a dynamical system with discrete phase space, $f$ does not always satisfy the totally uniqueness condition when $f$ satisfies the bounded condition. 
On the other hand, the following statement holds for every $(\hat{X},\sigma)$. 

\begin{proposition}\label{HatSatTUC}
For a symbolic dynamics $(\hat{X},\sigma)$, the shift map $\sigma$ satisfies the totally uniqueness condition. 
\end{proposition}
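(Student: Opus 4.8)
The plan is to show that the symbol map attached to the system $(\hat{X},\hat{f})$ is in fact the identity on $\hat{X}$, from which injectivity—and hence the totally uniqueness condition for $\hat{f}$—is immediate. Concretely, for $\xi \in \hat{X}$ let $\Phi(\xi)=(c_{j})_{j \in \N} \in \{1,2,\cdots,k\}^{\N}$ denote the sequence determined by $\hat{f}^{j-1}(\xi) \in \hat{X}_{c_{j}}$; by definition $\hat{f}$ satisfies the totally uniqueness condition precisely when $\Phi$ is injective. I would prove the stronger statement $\Phi = \id_{\hat{X}}$.

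First I would identify the subsets $\hat{X}_{i}$ with the first-coordinate cylinders inside $\hat{X}$. From the definition of $\hat{x}$, the first entry of $\hat{x}$ is the unique index $i_{1}$ with $x = f^{0}(x) \in X_{i_{1}}$; since the $X_{i}$ are mutually disjoint and cover $X$, this gives $x \in X_{i} \iff \hat{x}(1)=i$. Consequently $\hat{X}_{i}=\{\xi \in \hat{X} \mid \xi(1)=i\}$, so that membership of an element of $\hat{X}$ in $\hat{X}_{i}$ is decided entirely by its first coordinate. This also re-confirms that the $\hat{X}_{i}$ are mutually disjoint with union $\hat{X}$, which is the bounded condition already recorded in the definition of the symbolic dynamics.

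Next I would compute the shift orbit of $\xi$. For $\xi=(i_{1},i_{2},i_{3},\cdots) \in \hat{X}$ and $j \in \N$ we have $\hat{f}^{j-1}(\xi)=(i_{j},i_{j+1},i_{j+2},\cdots)$, whose first coordinate is $i_{j}=\xi(j)$. Combining this with the previous step, the unique index $c_{j}$ with $\hat{f}^{j-1}(\xi) \in \hat{X}_{c_{j}}$ is exactly $\xi(j)$. Hence $\Phi(\xi)=(\xi(j))_{j \in \N}=\xi$, so $\Phi$ is the identity and in particular injective; this is the totally uniqueness condition for $\hat{f}$.

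There is essentially no hard step here: the only thing requiring care is the identification $\hat{X}_{i}=\{\xi \in \hat{X} \mid \xi(1)=i\}$, which I would verify directly from the definitions of $\hat{x}$ and of $\hat{X}_{i}$. Once this cylinder description is in hand, the shift automatically reads off the coordinates of $\xi$ in order, so the symbol sequence of $\xi$ reproduces $\xi$ itself; crucially, no appeal to the totally uniqueness condition of the original $f$ is needed, in agreement with the remark preceding the statement.
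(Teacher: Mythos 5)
Your proof is correct and follows essentially the same route as the paper's: both arguments come down to showing that the itinerary of a sequence $\xi \in \hat{X}$ with respect to the partition $\{\hat{X}_{i}\}$ is $\xi$ itself, so the symbol map on $\hat{X}$ is injective. The only cosmetic difference is that you make the cylinder identification $\hat{X}_{i}=\{\xi \in \hat{X} \mid \xi(1)=i\}$ explicit and conclude $\Phi=\id_{\hat{X}}$ outright, whereas the paper derives $\hat{f}^{j-1}(\hat{x}) \in \hat{X}_{\hat{x}_{j}}$ via the relation $\widehat{f^{j-1}(x)}=\hat{f}^{j-1}(\hat{x})$ and then notes that distinct sequences have itineraries differing in some coordinate.
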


\begin{proof}
We will check that $\hat{f}^{j-1}(\hat{x}) \in \hat{X}_{\hat{x}(j)}$ for every $j \in \N$ and $\hat{x} \in \hat{X}$. 

Since $\widehat{f(x)}=\sigma(\hat{x})$, 

\begin{equation*}
\widehat{f^{j-1}(x)}=\sigma^{j-1}(\hat{x}). 
\end{equation*}

This implies that $f^{j-1}(x) \in X_{i}$ if and only if $\sigma^{j-1}(\hat{x}) \in \hat{X}_{i}$ for every $1 \leq i \leq k$. 
On the other hand, $\sigma^{j-1}(\hat{x}) \in \hat{X}_{i}$ if and only if $i=\hat{x}(j)$. 
Therefore, 

\begin{equation*}
\sigma^{j-1}(\hat{x}) \in \hat{X}_{\hat{x}(j)}. 
\end{equation*}

Let $\hat{x},\hat{y} \in \hat{X}$. 
If $\hat{x} \neq \hat{y}$, then there exists $j \in \N$ satisfying that $\hat{x}(j) \neq \hat{y}(j)$. 
Then, $\sigma^{j-1}(\hat{x}) \in \hat{X}_{\hat{x}(j)}$ and $\sigma^{j-1}(\hat{y}) \notin \hat{X}_{\hat{x}(j)}$. 
Thus, $\sigma$ satisfies the totally uniqueness condition. 
\end{proof}

The previous proposition and Theorem \ref{Bij} imply the following corollary. 

\begin{corollary}
For every $(X,f)$, the family of $\sigma$-invariant sets for the dynamical system $(\hat{X},\sigma)$ perfectly aligned with the family of reducing subspaces for the associated $C^{*}$-algebras as complete lattices. 
\end{corollary}

The following proposition is a direct consequence of the definition of the totally uniqueness condition for maps. 

\begin{proposition}
$\pi$ is bijective if and only if $f$ satisfies the totally uniqueness condition. 
\end{proposition}

\begin{remark}
A Markov partition $\{R_{1},R_{2},\cdots,R_{k}\}$ for a homeomorphism $f:X \to X$ of a compact metric space, gives rise to a symbolic dynamics, denoted by the map $\sigma:\Sigma_{A} \to \Sigma_{A}$ and a continuous surjection $\pi:\Sigma_{A} \to X$ such that $f \circ \pi=\pi \circ \sigma$ (not $\pi \circ f =\sigma \circ \pi$). 

The construction of $\pi$ is as follows: 
for $a=(\cdots, a_{-1}, a_{0}, a_{1}, \cdots) \in \Sigma_{A}$, let $K(a)$ be the set $\cap_{n \in \Z}f^{-j}(R_{a_{j}})$. 
According to the compactness of $X$, $K(a)$ has a single point. 
Then, let this point be the image of $a$ by $\pi$. 

When $f$ is not invertible, the definition of $K(a)$ in the above construction can be considered as follows: 

\begin{align*}
K(a)=\cap_{j \in \N}f^{-j}(R_{a_{j}}), 
\end{align*}

where $a=(a_{1},a_{2},a_{3},\cdots) \in \prod_{n \in \N}\{1,2,\cdots,k\}$. 

For a dynamical system $(X,f)$ with discrete phase space, assume that $f$ satisfies the bounded condition. 
Define the transition matrix $A=A(i,j)$ by 

\begin{align*}
A(i,j)= 
\begin{cases}
1,	&	X_{i} \cap f^{-1}(X_{j}) \neq \emptyset \\ 
0,	&	X_{i} \cap f^{-1}(X_{j}) = \emptyset, 
\end{cases}
\end{align*}

and the set $\Sigma_{A}$ by 

\begin{align*}
\Sigma_{A}=\{x=(x_{i}) \mid A(x_{i},x_{i+1})=1 \text{ for } i \in \N\}. 
\end{align*}

At this time, $K(a) \subseteq X$ is not always a set has only a single point. 
Let us prove this using the Collatz map as an example. 

When $X=\N$ and $f$ is the Collatz map, there exists a basic partition $X_{1}=\{n \in \N \mid n \text{: odd}\}$ and $X_{2}=\{n \in \N \mid n \text{: even}\}$ such that $f$ satisfies the bounded and totally uniqueness condition. 
Then, the transition matrix $A$ is $\begin{pmatrix}0 & 1 \\ 1 & 1\end{pmatrix}$. 
For a infinite sequence $a=(2,2,2,\cdots) \in \Sigma_{A}$, $x \in K(a)$ if and only if $f(x)^{j} \in X_{2}$ for every $j \in \N$. 
On the other hand, for every $n \in \N$, $f^{m}(n) \in X_{1}$ for some $m \in \N$. 
Hence, $K(a)=\emptyset$. 
\end{remark}


\section*{Acknowledgements}


I would like to thank Professor Hiroki Matui (Chiba University), my thesis supervisor, for helpful discussions in the seminar of the study.


\bibliographystyle{plain}

\bibliography{bib}


\end{document}